\documentclass[12pt]{amsart}

\setlength{\textwidth}{15.0cm} \setlength{\textheight}{22.5cm}
\setlength{\oddsidemargin}{0.0cm}
\setlength{\evensidemargin}{0.0cm} \voffset=-2.00cm
\setlength{\topmargin}{0.4cm} \setlength{\footskip} {15mm}

\theoremstyle{definition}
\newtheorem{definition}{Definition}
\theoremstyle{plain}
\newtheorem{theorem}{Theorem}
\newtheorem{corollary}{Corollary}
\newtheorem{lem}{Lemma}
\newtheorem{prop}{Proposition}
\newtheorem{ejem}{Example}
\theoremstyle{remark}

\newtheorem{remark}{Remark}
\pagestyle{plain}

\usepackage{color}

\DeclareMathOperator{\CC}{\mathbb C}
\DeclareMathOperator{\RR}{\mathbb R}

\DeclareMathOperator{\NN}{\mathbb N}


 \DeclareMathOperator{\sop}{supp}

\DeclareMathOperator{\essupp}{essupp}

\DeclareMathOperator{\Co}{Co}
\DeclareMathOperator{\Pc}{Pc}


\addtolength{\baselineskip}{.2\baselineskip}

\title{Smallest and largest generalized eigenvalues of large  moment  matrices and some applications.
 }





\author[1,2]{C. Escribano}

\author[1]{R. Gonzalo}

\author[1]{E. Torrano}

\address{ Departamento de Matem\'atica Aplicada a las Tecnolog\'{i}as de la Informaci\'on y las Comunicaciones, Escuela T\'ecnica Superior de Ingenieros Inform\'aticos y Center for Computational Simulation, Universidad Polit\'ecnica de Madrid, Campus de Montegancedo\\
      Boadilla del Monte, 28660 Madrid Spain, Phone: +34910673030 \\
      }

\email{cescribano@fi.upm.es}
\email{rngonzalo@fi.upm.es}
\email{emilio@fi.upm.es}
\thanks{Work partially supported by Comunidad
de Madrid (grant QUITEMAD-CM, ref. P2018/TCS-4342).}

\begin{document}

\begin{abstract}
The main aim of this work is to compare two Borel measures thorough their moment matrices using a new notion of smallest and largest generalized eigenvalues. 
 With this  approach we provide information in  problems as the localization of the support of a measure. In particular, we  prove that if a measure is {\it comparable} in an algebraic way with a measure in a Jordan curve then the curve is contained in its support. We  obtain a description of the convex envelope of the support of a measure via certain  Rayleigh quotients of certain infinite matrices. Finally some applications concerning  polynomial approximation  in  mean  square  are given, generalizing the results in \cite{EGT1}.

\end{abstract}

\maketitle
\begin{quotation} {\sc {\footnotesize Keywords}}. {\small Hermitian moment problem,  orthogonal polynomials,
smallest eigenvalue, measures, approximation by polynomials}
\end{quotation}

\section{Introduction}

\noindent  Let $\mathbf{M}=(c_{i,j})_{i,j=0}^{\infty}$ be an infinite 
Hermitian matrix, i.e., $c_{i,j}= \overline{c_{j,i}}$ for all
$i,j$ non-negative integers. Following \cite{EGT1} we say that an infinite hermitian matrix $\mathbf{M}$ is positive definite, in short an HPD matrix (resp. positive semidefinite, in short HSPD matrix)   if
$\vert \mathbf{M}_n \vert >0$ (resp. $\vert \mathbf{M}_n \vert \geq 0$ for all $n\geq 0$), where $\mathbf{M}_n$ is the truncated matrix of size $(n+1)\times (n+1)$ of $\mathbf{M}$. Following the notation in \cite{EGT1} and \cite{EGT5} an  infinite
HPD  matrix defines an inner product, denoted by  $\langle \cdot, \cdot \rangle_{\mathbf{M}}$,  in the space $\mathbb{P}[z]$ of all polynomials with complex coefficientes in the following way:
if  $p(z)=\sum_{k=0}^{n}v_kz^k$ y
$q(z)=\sum_{k=0}^{m}w_kz^k$ then
\begin{equation}
 \label{uno}
\langle p(z)
,q(z) \rangle_{\mathbf{M}}=v\mathbf{M}w^{*}
\end{equation}

\noindent being $v=(v_0,\dots,v_n,0,0, \dots), w=(w_0,\dots,w_m,0,0, \dots) \in c_{00}$ where $c_{00}$ is the space
of all complex sequences with only finitely many non-zero entries and the 
symbol $*$ denotes the conjugate transpose for matrices of any sice.
  The associated
norm is denoted by $\Vert p(z) \Vert_{\mathbf{M}}^2 = \langle p(z),p(z)\rangle_{\mathbf{M}} $ for every $p(z)\in \mathbb{P}[z]$, and the vector space $\mathbb{P}[z]$ is a pre-hilbertian space endowed with such norm and its completion will be  denoted by $P^{2}_{\mathbf{M}}$. In this way,  $P^{2}_{\mathbf{M}}$ is a Hilbert space associated to the inner product  $\langle \cdot ,\cdot \rangle_{\mathbf{M}}$.

\medskip

\noindent Of course, the most  interesting
class of infinite HPD (and also HSPD) matrices are those which are  moment matrices
 with respect to a certain positive measure (see e.g. \cite{Van Assche}),
i.e.,  matrices  $\mathbf{M}=(c_{i,j})_{i,j=0}^{\infty}$ such that there exists a  positive Borel measure $\mu$ with support in the complex plane, which is called a {\it representating measure} of $\mathbf{M}$,  with  finite moments for all $i,j\geq 0$ 
\begin{equation} \label{dos}
c_{i,j}= \int z^{i} \overline{z}^jd\mu.
\end{equation}
\noindent Thorough the paper we always consider positive  Borel measures 
$\mu$ supported in the complex plane and with finite moments. The associated moment matrix to a measure $\mu$, denoted by $\mathbf{M}(\mu)$,  is always an infinite HSPD, being  an  HPD matrix  if and only if the support of $\mu$ is an infinite set.  For more information concerning the characterization of infinite HPD matrices which are moment matrices with respect to a certain measure $\mu$ with support in  $\CC$ see among others  \cite{Atzmon}, \cite{Berg-Maserik} and \cite{Szafraniec}.

\bigskip

\noindent Our aim here is to  study certain geometrical and localization  properties of the support of measures  thorough their associated moment matrices.  This will be done   in  the context of matrix algebra  using as  essential tool  the infinite matrices and the generalizations of some well  known notions as generalized eigenvalues. 
The  approach is the same as in  \cite{Szego}, \cite{Berg}, \cite{EGT1} and \cite{EGT5} and the essential key 
is  the following identity: if $p(z)=v_0+v_1z+\dots+v_nz^n \in \mathbb{P}[z]$ and $(v_0, v_1, \dots,v_n,0,0,0,\dots)\in c_{00}$,
\begin{equation} \label{identification}
v\mathbf{M}(\mu)v^{*}=\int \vert p(z) \vert^2 \; d\mu . \qquad \qquad \qquad  
\end{equation}

\noindent In the case of Toeptliz definite positive matrices, which are moment matrices associated to measures with support in the unit circle, the behaviour  of various spectral characteristics, in particular eigenvalues, eigenvectors, and extreme eigenvalues, has been an object of active investigation (see e.g. the classical  Szego's paper  \cite{Szego} or \cite{Grenader-Szego}). 
In  the case of Hankel positive moment  matrices in  \cite{Berg-Chen-Ismail} a characterization of the uniqueness of the representing measure  is given in terms of the asymptotic behaviour of the smallest eigenvalues of the truncated matrices  of the moment matrix. The analysis of the largest and smallest eigenvalues plays an important role since they provide useful information about the nature of Hankel matrices generated for weight functions and is a topic of great interest and activity (see e.g. \cite{Berg}, \cite{Berg1}, and \cite{Wang}  among others).  We here are concerned with   general    moment matrices associated with measures supported  in the complex plane. In this direction  in
\cite{EGT1}  a sufficient condition is given to assure  polynomial approximation in the corresponding space of measure $L^{2}({\mu})$ (for compactly supported measures ) also in terms of the asymptotic behaviour of the smallest eigenvalues. With  the same approach in  \cite{EGT5}, for  measures supported in Jordan curves,   a characterization of polynomial approximation is given. 

\bigskip

\noindent The main novelty in this work is the analysis of   {\it generalized} eigenvalues of large hermitian matrices  as a new tool in order to compare two different measures or even sets ( suggested in  \cite{Szego}). Our motivation was to realize that  the analysis  of  eigenvalues   can be seen as a way to compare any infinite hermitian  matrix with the  identity matrix $\mathbf{I}$, which is indeed the 
moment matrix associated with the uniform
 Lebesgue measure  in the unit circle. Such comparison   does not provide, in general,  any information concerning density polynomial when we consider
measures supported in other  disks. Nevertheless, we can  make such comparison between any two measures via {\it generalized eigenvalues} as we will do; with this approach we may   describe the support of one measure  in relation with the support of another.

\bigskip 
\noindent In the first section, given two infinite HPD matrices we introduce two indexes that describe the assymptotic behaviour of the   smallest and largest generalized eigenvalues of large hermitian   matrices. In the particular case of moment matrices  these indexes provide an algebraic tool to compare measures. Some properties of these indexes will be given. 

\bigskip
\noindent In the second section we see the impact of the largest generalized eigenvalue of a measure  with respect to other in the support of the first in relation with the support of the second. The main result of this  section asserts that if such largest generalized eigenvalue is finite then the support of the first measure is contained in the polynomially convex hull of the support of the second. In particular, for measures with polynomially convex support, that means without {\it holes} in their supports, that are {\it well comparable} between them (with positive   smallest generalized eigenvalue  and finite  largest generalized eigenvalue ) the supports coincide. In the particular case of a good comparison of a 
certain measure with a measure supported in  a Jordan curve we obtain that the Jordan  curve must be contained in the support of such measure; this lets to find Jordan curves in the supports of a measure using this algebraic approach. 

\bigskip
\noindent The third section is devoted to the description of the convex envelope of the support of a measure. Our main result is that the convex hull of the support of a measure can be obtained  using Rayleigh quotients of a certain infinite matrix (mainly, the moment matrix without the first row) with the moment matrix. In order to prove this result we use theory of normal operators in Hilbert spaces and results concerning Hessemberg matrices in the theory of orthogonal polynomials. 

\bigskip
\noindent In the last section we  generalize the results of polynomial approximation in mean square in \cite{EGT1}. This generalization is obtained  when we replace the uniform Lebesgue measures for other uniform Lebesgue measures in circles, which are, in particular, non complete . We do not know if we can replace these measures by any  measure for which there is no  polynomial approximation in mean square. In this direction, we give a partial answer when the first measure is supported in a Jordan curve. 

\bigskip

\noindent We begin with some notation and definitions:

\bigskip
\noindent Thorough all the paper  we denote by $\mathbb{D}=\{z\in \CC\; \vert z \vert <1\}$ and $\mathbb{T}=\{z\in \CC\; \vert z \vert=1\}$. For general disks we denote $\mathbb{D}_{r}(z_0)=\{ z\in \CC : \; \vert z-z_0 \vert <r\}$ and $\mathbb{S}_{r}(z_0)= \{ z\in \CC : \; \vert z-z_0 \vert =r\}$. We denote by ${\bf m}$ the uniform Lebesgue measure in the unit sphere; for general spheres we denote by 
${\bf m}_{z_0;r}$ the uniform Lebesgue measure in the sphere $\mathbb{S}_{r}(z_0)$ and by simplicity in the particular case of spheres with center $0$ we denote  ${\bf m}_r={\bf m}_{0,r}$. 

\section{Generalized eigenvalues associated to infinite SHPD matrices.}

\noindent Our aim in this section is to extend  the notion of generalized eigenvalues in the context of infinite dimensional  matrices. Recall that (see e.g.  \cite{Bhatia})

\begin{definition} Given two finite matrices of size $n\times n$,  $\mathbf{A}$ 
and $\mathbf{B}$,  we say that $\lambda$ is an eigenvalue of $\mathbf{A}$ with respect to $\mathbf{B}$ if there exists a nonzero vector $v\in \CC^{n}$ such that $\mathbf{A}v= \lambda \mathbf{B}v$. These are called the {\it generalized eigenvalues} of $\mathbf{A}$ with respect $\mathbf{B}$ .
\end{definition}

\noindent It is well known that in the
 case of hermitian matrices the generalized eigenvalues are real numbers. 
This lets to  define the following indexes:

\begin{definition} Let $\mathbf{A},\mathbf{B}$ be Hermitian matrices of size $n$, we define $\lambda_n(\mathbf{A},\mathbf{B})$ as the smallest eigenvalue of $\mathbf{A}$ with respect to $\mathbf{B}$ and
$\beta_n(\mathbf{A},\mathbf{B})$ as the largest eigenvalue of $\mathbf{A}$ with respect to $\mathbf{B}$.
\end{definition}

\medskip

\noindent It is obvious that $\lambda_n(\mathbf{A},\mathbf{B}) \leq \beta_n(\mathbf{A},\mathbf{B})$. Moreover, in the case of Hermitian semi-definite positive matrices  $\lambda_n(\mathbf{A},\mathbf{B}) \geq 0$.

\bigskip

\noindent  Of course, the notion of generalized eigenvalue is a generalization of the notion of eigenvalues. Indeed, in  the particular case of $\mathbf{B}=\mathbf{I}$ (the identity matrix of size n ) it is obvious that  $\lambda_n(\mathbf{A},\mathbf{I})=\lambda_n(\mathbf{A})$ and  $\beta_n(\mathbf{A},\mathbf{I})=\beta_n(\mathbf{A})$ with the notation introduced in \cite{EGT1}.
It is well known by the classical Rayleigh quotient that for finite hermitian matrices it follows 

\begin{eqnarray*}
\lambda_{n}(\mathbf{A})& =& \inf\{
 \dfrac{ v\mathbf{A}v^{*}}{vv^{*}}, v\in \CC^{n}\setminus\{0\}\}= \inf\{ v\mathbf{A}v^{*}: \; v\in \CC^{n},  vv^{*}=1\},\\
\beta_n(\mathbf{A}) & =& \sup \{ \dfrac{ v\mathbf{A}v^{*}}{vv^{*}} :  v\in \CC^{n}\setminus\{0\}\}   =  \sup \{v\mathbf{A}v^{*}:  \; v\in \CC^{n}, vv^{*}=1\}.
\end{eqnarray*}

\bigskip

\noindent This criteria also holds for the generalized eigenvalues replacing the identity matrix by an HPD matrix $\mathbf{B}$ ( see e.g.  \cite{Gantmacher}). 

\bigskip

\noindent ({\it Generalized  Rayleigh  criteria}) Let $\mathbf{A}, \mathbf{B}$ be HSPD matrices of size  $n$ being $\mathbf{B}$ positive definite, then:

\begin{eqnarray*}
\lambda_{n}(\mathbf{A},\mathbf{B}) & = & \inf \{ v\mathbf{A}v^{*}: \;v\in \CC^{n},   v\mathbf{B}v^{*}=1\}=\inf\{ \dfrac{ v\mathbf{A}v^{*}}{v\mathbf{B}v^{*}} :   v\in\CC^{n}\setminus\{0\} \; \},\\
\beta_n(\mathbf{A},\mathbf{B}) & =& \sup \{v\mathbf{A}v^{*}: \; v\in \CC^{n},  v\mathbf{B}v^{*}=1\}=\sup \{ \dfrac{ v\mathbf{A}v^{*}}{v\mathbf{B}v^{*}} :  v\in \CC^{n}\setminus\{0\} \; \}.
\end{eqnarray*}

\bigskip
\noindent The definition of the Rayleigh quotients can be given in the more general context of infinite matrices (not necessarily hermitian ) in the following way:

\begin{definition}  \label{definicion4}
Given an  infinite matrix $\mathbf{A}$ and an infinite HPD  matrix $\mathbf{B}$ a  Rayleigh quotient of $\mathbf{A}$ related to $\mathbf{B}$ is $\dfrac{v\mathbf{A}v^{*}}{v\mathbf{B}v^{*}}$ with $v \in c_{00}\setminus \{0\}$ and the set of Rayleigh quotients is the following set in the complex plane:
$$
\{\; \dfrac{v\mathbf{A}v^{*}}{v\mathbf{B}v^{*}}: \;   v\in  c_{00} \setminus\{0\} \; \}=\{v\mathbf{A}v^{*} : \;  v\in c_{00}, \; v\mathbf{B}v^{*}=1\}.
$$
\end{definition}

\bigskip

\noindent  We generalize the indexes $\lambda$ and $\beta$ introduced for 
finite matrices  in the context of infinite dimensional matrices in the same lines as in \cite{EGT1}, \cite{EGT5}.
In order to do it, consider  two HSPD matrices  $\mathbf{A}, \mathbf{B}$ with $\mathbf{B}$ being positive definite, and let $\mathbf{A}_n, \mathbf{B}_n$ the their truncated matrices of size $(n+1)\times (n+1)$. It is easy to check that the sequence of real numbers  $\{\lambda_{n}(\mathbf{A}_n,\mathbf{B}_n)\}_{n=0}^{\infty}$ is a non increasing sequence and the sequence $\{\beta_n(\mathbf{A}_n,\mathbf{B}_n)\}_{n=0}^{\infty}$ is a non decreasing sequence. Therefore, $\displaystyle{\lim_{n\to
\infty}\lambda_{n}(\mathbf{A}_n,\mathbf{B}_n)}$  exists, and either is zero or a positive number. On the other hand, $\displaystyle{\lim_{n\to
\infty}\beta_{n}(\mathbf{A}_n,\mathbf{B}_n)}$ exists (is a real number) or it is infinite. Thus, we introduce the following indexes

\bigskip

\begin{definition} Let  $\mathbf{A},\mathbf{B}$ two infinite HSPD matrices with $\mathbf{B}>0$. We define:
\begin{eqnarray*}
0 \leq \lambda(\mathbf{A},\mathbf{B})&:=&\lim_{n\to \infty} \lambda_{n}(\mathbf{A}_n,\mathbf{B}_n)\\
\beta(\mathbf{A},\mathbf{B})& := &  \lim_{n\to
\infty}\beta_{n}(\mathbf{A}_n,\mathbf{B}_n) \leq \infty
\end{eqnarray*}
\noindent where $\mathbf{A}_n, \mathbf{B}_n$ are the corresponding truncated matrices of size  $(n+1)\times (n+1)$ of $\mathbf{A},\mathbf{B}$ respectively.

\end{definition}

\bigskip

\noindent Moreover, using the generalized Rayleigh criteria it follows:

\begin{eqnarray*}
\lambda(\mathbf{A},\mathbf{B})& = & \inf \{\; \dfrac{v\mathbf{A}v^{*}}{v\mathbf{B}v^{*}}: \;  v\in  c_{00} \setminus \{0\} \; \}=\inf \{v\mathbf{A}v^{*} : \;  v\in c_{00}, \; v\mathbf{B}v^{*}=1\},\\
\beta(\mathbf{A},\mathbf{B}) & = &  \sup\{\;  \dfrac{v\mathbf{A}v^{*}}{v\mathbf{B}v^{*}}: \;   v\in  c_{00} \setminus \{0\} \; \}
= \sup \{v\mathbf{A}v^{*} : \;  v\in c_{00}, \; v\mathbf{B}v^{*}=1\}.
\end{eqnarray*}

\bigskip

\begin{remark} In the case that $\mathbf{A}>0$ and $\mathbf{B}>0$ the following relationship between these indexes is given:
\begin{equation} \label{tres}
\lambda(\mathbf{A},\mathbf{B})=\frac{1}
{\beta(\mathbf{B},\mathbf{A})}
\end{equation}

\end{remark}

\bigskip

\smallskip
\noindent The particularization of the indexes $\lambda$ and $\beta$ for infinite moment matrices lets to  introduce  these  indexes between measures in the following way:

 \begin{definition} \label{definicion7}
 Given  two   supported in the complex 
plane measures  $\mu_{1},\mu_{2}$ with $\mu_{2}$ infinitely  supported we define the following indexes 

\[
\lambda(\mu_1,\mu_2):= \lambda\left(\mathbf{M}(\mu_1),\mathbf{M}(\mu_2)\right) \qquad \beta(\mu_1,\mu_2):=\beta(\mathbf{M}(\mu_1),\mathbf{M}(\mu_2)).
\]

\end{definition}

\bigskip

\begin{remark}  By remark $1$ in the case that both measures $\mu_1,\mu_2$ have infinite support the following relationship between the indexes follows:
$$
\lambda(\mu_1, \mu_2)=\frac{1}{\beta(\mu_2,\mu_1)}.
$$
\end{remark}
\bigskip

\begin{ejem}  \label{ejemplo1}  For uniform Lebesgue measures ${\bf m}_r$ with $r>0$ it holds 

\begin{enumerate}
\item[i)] If $r<1$,
$
\lambda({\bf m_{r}},{\bf m})=0, \qquad \beta({\bf m_{r}},{\bf m})=1.
$
\medskip

\item[ii)] If $r>1$,
$
\lambda({\bf m_{r}},{\bf m})=1, \qquad \beta({\bf m_{r}},{\bf m})=\infty .
$
\end{enumerate}

\noindent Indeed, denote by  ${\bf M}_{r}$  the associated moment matrix with ${\bf m}_r$ which is given by:

$$
\mathbf{M}_{r}= \left(
         \begin{array}{cccc}
           1 & 0 & 0 & \hdots \\
           0 & r^2 & 0 & \hdots \\
           0 & 0 & r^4 & \hdots \\
           \vdots & \vdots & \vdots & \ddots  \\
         \end{array}
       \right).
$$
\noindent Then the truncated matrices of size $(n+1)\times(n+1)$ are diagonals and the result follows easily.

\end{ejem}

\bigskip

\begin{ejem} \label{ejemplo2}
The Pascal matrix $\mathbf{P}$ is the moment matrix   associated with the normalized Lebesgue measure ${\bf m}_{1,1}$ in $\mathbb{S}_{1}(1)$ ( see e.g. \cite{EGT1})
$$
\mathbf{P}= \left(
              \begin{array}{ccccc}
                1 & 1 & 1 & 1 & \hdots  \\
                1 & 2 & 3 & 4 & \hdots \\
                1 & 3 & 6 & 10 & \hdots \\
                1 & 4 & 10 & 20 & \hdots \\
                \vdots  & \vdots & \vdots & \vdots & \ddots \\
              \end{array}
            \right).
$$

\noindent As  it was pointed out in \cite{EGT1} $\displaystyle{\lim_{n\to 
\infty} \lambda_n(\mathbf{P})=\lambda({\bf m}_{1,1},{\bf m})=0}$. On the other hand, $\beta({\bf m}_{1,1},{\bf m})=\infty$; indeed,  by Rayleigh criterion $\beta(\mu,{\bf m}) =\beta(\mathbf{P},\mathbf{I}) \geq e_n
\mathbf{P}e_n^{*}=c_{n,n} \to \infty$ whenever  $n\to \infty$ (where $e_n$ is the usual basis in the space  $c_{00}$).
\end{ejem}

\noindent The following result, which is a generalization  of the results in \cite{EGT1},  is the key to establish the  comparison of measures via generalized eigenvalues of the associated moment matrices:

\begin{prop} \label{proposicion1}
Let  $\mu_{1},\mu_{2}$ be  two   measures  supported in the complex plane  with  $\mu_2$ infinitely supported and let $\mathbf{M}_i=\mathbf{M}(\mu_{i}),i=1,2,$   their associated  moment matrices. Then, 

\begin{itemize}
\item[(i)] $\beta(\mu_1,\mu_2)<\infty$ if and only if there exists 
 a constant $c>0$ such that for every polynomial $p(z)$
$$
\int \vert p(z) \vert^2 d\mu_1 \leq c\int \vert p(z) \vert^2 d\mu_2.
$$

 \item[(ii)] $\lambda(\mu_1,\mu_2)>0$ if and only if there is a constant $c>0$ such that for every polynomial $p(z)$
$$
\int \vert p(z) \vert^2 d\mu_1 \geq  c\int \vert p(z) \vert^2 d\mu_2.
$$

\end{itemize}
\end{prop}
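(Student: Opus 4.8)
\emph{Proof strategy.} The plan is to reduce the statement entirely to the generalized Rayleigh criteria for infinite matrices combined with the basic identification (\ref{identification}). First I would fix the dictionary: to a polynomial $p(z)=v_0+v_1z+\dots+v_nz^{n}$ associate $v=(v_0,\dots,v_n,0,0,\dots)\in c_{00}$; this is a bijection $\mathbb{P}[z]\to c_{00}$, and (\ref{identification}) says $v\mathbf{M}_i v^{*}=\int|p(z)|^2\,d\mu_i$ for $i=1,2$. Since $\mu_2$ is infinitely supported, $\mathbf{M}_2$ is HPD, so $v\mathbf{M}_2 v^{*}=\int|p|^2\,d\mu_2>0$ whenever $v\neq 0$ (equivalently $p\not\equiv 0$); this makes all the Rayleigh quotients below well defined, regardless of whether $\mathbf{M}_1$ is definite or only semidefinite.

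For part (i) I would argue both implications directly. If $\beta(\mu_1,\mu_2)=\beta(\mathbf{M}_1,\mathbf{M}_2)<\infty$, set $c:=\beta(\mathbf{M}_1,\mathbf{M}_2)$; the Rayleigh characterization of $\beta$ gives $v\mathbf{M}_1 v^{*}\le c\,v\mathbf{M}_2 v^{*}$ for all $v\in c_{00}$ (the case $v=0$ being trivial), and translating through (\ref{identification}) yields $\int|p|^2\,d\mu_1\le c\int|p|^2\,d\mu_2$ for every polynomial $p$. Conversely, a bound $\int|p|^2\,d\mu_1\le c\int|p|^2\,d\mu_2$ with $c>0$ becomes $v\mathbf{M}_1 v^{*}\le c\,v\mathbf{M}_2 v^{*}$, hence $v\mathbf{M}_1 v^{*}/(v\mathbf{M}_2 v^{*})\le c$ for every $v\in c_{00}\setminus\{0\}$; taking the supremum and using the Rayleigh criterion again gives $\beta(\mu_1,\mu_2)\le c<\infty$.

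Part (ii) is the mirror image with $\inf$ in place of $\sup$: from $\lambda(\mu_1,\mu_2)=\lambda(\mathbf{M}_1,\mathbf{M}_2)=c>0$ one gets $v\mathbf{M}_1 v^{*}\ge c\,v\mathbf{M}_2 v^{*}$ for all $v\in c_{00}$, i.e. $\int|p|^2\,d\mu_1\ge c\int|p|^2\,d\mu_2$; and conversely such an inequality forces $v\mathbf{M}_1 v^{*}/(v\mathbf{M}_2 v^{*})\ge c$ for every $v\in c_{00}\setminus\{0\}$, so $\lambda(\mu_1,\mu_2)\ge c>0$. I do not anticipate any substantial obstacle here; the only points requiring care are formal — the bijection between $\mathbb{P}[z]$ and $c_{00}$, the separate trivial treatment of $p\equiv 0$, and the use of the positive definiteness of $\mathbf{M}_2$ (equivalently, the infinite support of $\mu_2$) both to keep the denominators $\int|p|^2\,d\mu_2$ strictly positive and to guarantee that $\lambda$ and $\beta$ are genuinely given by the Rayleigh formulas. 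In essence the proposition just records that $\beta(\mu_1,\mu_2)$ and $\lambda(\mu_1,\mu_2)$ are, respectively, the optimal constants in the two one-sided comparisons of $\mu_1$ and $\mu_2$ tested against polynomials.
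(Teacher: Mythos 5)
Your proposal is correct and follows essentially the same route as the paper's own proof: both reduce the statement to the generalized Rayleigh characterization of $\beta$ and $\lambda$ as the supremum and infimum of the quotients $v\mathbf{M}_1v^{*}/v\mathbf{M}_2v^{*}$ over $c_{00}\setminus\{0\}$, and then translate via the identification (\ref{identification}). Your explicit attention to the positive definiteness of $\mathbf{M}_2$ (so that denominators are nonzero) is a point the paper leaves implicit, but the argument is the same.
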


\begin{proof}

\noindent Assume $\beta(\mu_{1}, \mu_{2})<  \infty$ is true, then for every polynomial  $p(z)=\sum_{k=0}^{n}v_kz^k$  and $v=(v_0,v_1,\dots,v_n,0,0,0,\dots)\in c_{00}$, with  $n\in \NN$,   via the identification (\ref{identification}) we have
that 
$$
v\mathbf{M}_iv^{*}=\int \vert p(z) \vert^2 d\mu_i, \qquad i=1,2.
$$

\noindent and therefore
$$
\beta(\mu_1,\mu_2)= \sup \{ \;  \dfrac{v\mathbf{M}_1v^{*}}{v\mathbf{M}_2v^{*}}, v\in c_{00}\setminus \{0\} \; \}.
$$
\noindent By taking  $c= \beta(\mu_1,\mu_2)>0$, it follows that for every  $p(z)\in \mathbb{P}[z]$

$$
\dfrac{\int \vert p(z) \vert^2 d\mu_1}{\int \vert p(z) \vert^2 d\mu_2} \leq c.
$$

\noindent Then, for every $p(z)\in \mathbb{P}[z]$,

$$
\int \vert p(z) \vert^2 d\mu_1 \leq c \int \vert p(z) \vert^2 d\mu_2,
$$

\noindent  On the other hand, assume  now that there exists 
 $c>0$ verifying that for  every $p(z)\in \mathbb{P}[z]$,
$$
\int \vert p(z) \vert^2 d\mu_1 \leq c \int \vert p(z) \vert^2 d\mu_2.
$$

\noindent By using again the identification (\ref{identification}) we have

\begin{eqnarray*}
\beta(\mu_1,\mu_2)&=& \sup \{ v\mathbf{M}_1v^{*} : v\mathbf{M}_2v^{*}=1,v\in c_{00}\} =\\
\mbox{} & = & 
\sup \{\int \vert p(z)\vert^2 d\mu_1 : \int \vert p(z) \vert^2 d\mu_2=1, p(z)\in \mathbb{P}[z]\} \leq c;
\end{eqnarray*}
\noindent  consequently $\beta(\mu_1,\mu_2)<\infty$ as we required.

\bigskip
\noindent  The proof for the index $\lambda$ is analogous.

\end{proof}

\bigskip

\section{Impact of generalized eigenvalues in the support of the measures}

\noindent First of all we begin with an easy result that shows  the impact of the behaviour of the diagonal sequence $\{c_{n,n}\}_{n=0}^{\infty}$ of a certain moment matrix $\mathbf{M}=(c_{i,j})_{i,j=0}^{\infty}$ in the boundedness of the support of the representing measure $\mu$. Note that the sequence $\{c_{n,n}\}_{n=0}^{\infty}$ verifies   the property:
\begin{equation} \label{C-S}
c_{n,n}^2 \leq c_{n-1,n-1}c_{n+1,n+1} \qquad n\geq 1
\end{equation}

\noindent Indeed, this is a consequence of  the Cauchy-Schwartz inequality of the inner product in the  Hilbert space  $L^{2}(\mu)$. In this case,  $\{\frac{c_{n,n}}{c_{n+1,n+1}}\}_{n=0}^{\infty}$ is a non decreasing sequence and consequently, by elementary results  
either
$\displaystyle{\lim_{n\to \infty} \vert c_{n,n}\vert^{\frac{1}{n}}}<\infty$ or $\displaystyle{\lim_{n\to \infty} \vert c_{n,n}\vert^{\frac{1}{n}}}=\infty$. The following  characterization of  boundedness of the support of the measure  is proved in \cite{Tomeo}. We prove it for the sake of completeness.

\begin{prop} \cite{Tomeo} \label{proposicion2}
Let $\mathbf{M}=(c_{i,j})_{i,j=0}^{\infty} $ be a positive definite moment matrix and let $\mu$ be any representing measure. Then,  the following are equivalent:
\begin{enumerate}
\item[i)] $\sop(\mu)$ is compact.
\item[ii)]  $\displaystyle{\lim_{n\to \infty}   \vert c_{n,n}\vert^{\frac{1}{2n}} < \infty}$
\end{enumerate}
\noindent Moreover, in this case if $R=\displaystyle{\lim_{n\to \infty} 
 \vert c_{n,n}\vert^{\frac{1}{2n}}}$ then $R= \sup \{ \vert z \vert: \;  z\in \sop(\mu)\}$.
\end{prop}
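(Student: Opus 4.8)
The plan is to identify $\lim_{n\to\infty}|c_{n,n}|^{1/2n}$ explicitly as the $\mu$-essential supremum of the function $z\mapsto|z|$, and then to observe that this essential supremum equals $\sup\{|z|:z\in\sop(\mu)\}$. Concretely, since $c_{n,n}=\int z^{n}\overline{z}^{n}\,d\mu=\int|z|^{2n}\,d\mu$, the number $|c_{n,n}|^{1/2n}$ is exactly the norm of the identity function in $L^{2n}(\mu)$; by the discussion preceding the statement, $R:=\lim_{n\to\infty}|c_{n,n}|^{1/2n}$ exists in $[0,\infty]$. Writing $R_{0}:=\sup\{|z|:z\in\sop(\mu)\}\in[0,\infty]$, I would prove the single assertion $R=R_{0}$, \emph{without} assuming compactness; both equivalences and the final identity are then immediate, because $\sop(\mu)$ is always closed, hence compact if and only if bounded, i.e. if and only if $R_{0}<\infty$.

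For the inequality $R\le R_{0}$: if $R_{0}=\infty$ there is nothing to prove, and if $R_{0}<\infty$ then $\sop(\mu)\subseteq\{|z|\le R_{0}\}$, so $|z|\le R_{0}$ holds $\mu$-a.e.; hence $c_{n,n}=\int|z|^{2n}\,d\mu\le R_{0}^{2n}\,\mu(\CC)=R_{0}^{2n}c_{0,0}$, and taking $2n$-th roots and letting $n\to\infty$ gives $R\le R_{0}$ (using $c_{0,0}^{1/2n}\to 1$).

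For the reverse inequality $R\ge R_{0}$: fix any $R'$ with $0\le R'<R_{0}$. By definition of $R_{0}$ there is $z_{0}\in\sop(\mu)$ with $|z_{0}|>R'$; choosing $\varepsilon=(|z_{0}|-R')/2>0$, the open ball $B(z_{0},\varepsilon)$ has positive $\mu$-measure $m$ (it is a neighbourhood of a support point) and is contained in $\{|z|>R'\}$ by the reverse triangle inequality. Therefore $c_{n,n}\ge\int_{B(z_{0},\varepsilon)}|z|^{2n}\,d\mu\ge (R')^{2n}m$, whence $|c_{n,n}|^{1/2n}\ge R'\,m^{1/2n}\to R'$, so $R\ge R'$; letting $R'\uparrow R_{0}$ yields $R\ge R_{0}$, and combined with the previous step, $R=R_{0}$.

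The argument is short, and the only genuinely non-formal point is the last one: passing from the analytic quantity $R$ to the geometric radius $R_{0}$ uses the topological fact that every open neighbourhood of a point of $\sop(\mu)$ carries positive mass, together with a uniform treatment of the degenerate cases $R_{0}=0$ and $R_{0}=\infty$ built into the scheme above. I would also record in passing that $R=R_{0}$ holds for \emph{every} representing measure, so all representing measures of a fixed $\mathbf{M}$ have supports with the same circumradius, and that positive definiteness of $\mathbf{M}$ is not actually needed anywhere in the proof.
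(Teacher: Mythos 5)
Your proof is correct and follows essentially the same route as the paper's: the upper bound $R\le R_{0}$ comes from $c_{n,n}=\int|z|^{2n}\,d\mu\le R_{0}^{2n}\,\mu(\CC)$, and the lower bound from the positive mass of a small ball around a support point $z_{0}$ with $|z_{0}|$ close to $R_{0}$. The only difference is organizational: you package both directions as the single identity $R=R_{0}$ in $[0,\infty]$ (which cleanly handles the unbounded case and makes the "moreover" part automatic), whereas the paper proves the two implications separately, the second by contradiction.
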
\begin{proof} Assume first  that   $\sop(\mu)$ is a bounded set and $R= \sup \{ \vert z \vert: \; z\in \sop(\mu)\}$ (note that $R>0$ since the support is infinite). Then $\sop(\mu) \subset \overline{\mathbb{D}}_{R}(0)$ and

$$
\vert c_{n,n} \vert=\int \vert z \vert^{2n} \; d\mu \leq R^{2n} \mu\left(\overline{\mathbb{D}}_{R}(0)\right)=c_{00}R^{2n}
$$
\noindent Therefore, $\displaystyle{\lim_{n\to \infty}   \vert c_{n,n}\vert^{\frac{1}{2n}}\leq R <\infty}$.

\noindent On the other hand, assume that $\displaystyle{\lim_{n\to \infty}   \vert c_{n,n}\vert^{\frac{1}{2n}} =R}$, we prove that  $\sop(\mu)\subset \overline{\mathbb{D}}_{R}(0)$. Assume the contrary, then there exists $z_0\in \sop(\mu)$ such that $\vert z_0 \vert>R$. Thus thus we may choose $r>0$ and $\epsilon>0$ such that
$\vert z \vert \geq R+\epsilon$ for every $z\in \mathbb{D}_r(z_0)$. Therefore, for all $n\in \NN$
$$
\int \vert z \vert^{2n} \; d\mu \geq \int_{\mathbb{D}_r(z_0)} \vert z \vert^{2n} \; d\mu \geq (R+\epsilon)^{2n}\mu(\mathbb{D}_r(z_0))=c(R+\epsilon)^{2n}.
$$

\noindent Then, $\displaystyle{\lim_{n\to \infty}\vert c_{n,n}\vert^{\frac{1}{2n}} \geq R+\epsilon}$ which is not possible.
\end{proof}

\noindent As a consequence of the above result we obtain the following result:

\noindent
\begin{corollary} \label{corolario1}
Let $\mu$ be a measure such that $\beta(\mu,{\bf m}_r)<\infty$ for some $r>0$. Then, $\displaystyle{\lim_{n\to \infty} \vert c_{n,n} \vert^{\frac{1}{2n}}\leq r}$ and consequently $\sop(\mu) \subset \overline{\mathbb{D}}_{r}(0)$.
\end{corollary}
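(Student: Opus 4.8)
The plan is to chain together Proposition \ref{proposicion1} and Proposition \ref{proposicion2}, testing the polynomial inequality on monomials.

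First I would use Proposition \ref{proposicion1}(i): since $\beta(\mu,{\bf m}_r)<\infty$ and ${\bf m}_r$ is infinitely supported (so that $\mathbf{M}_r$ is HPD and the index is defined), there is a constant $c>0$ with $\int |p(z)|^2\,d\mu \le c\int |p(z)|^2\,d{\bf m}_r$ for every $p(z)\in\mathbb{P}[z]$. Then I would specialize to the monomials $p(z)=z^n$. On the left-hand side one gets exactly the diagonal entry $c_{n,n}=\int |z|^{2n}\,d\mu$ of $\mathbf{M}(\mu)$; on the right-hand side, by the explicit diagonal form of $\mathbf{M}_r$ recorded in Example \ref{ejemplo1}, one gets $\int |z|^{2n}\,d{\bf m}_r = r^{2n}$. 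Hence $c_{n,n}\le c\,r^{2n}$ for all $n$, so $|c_{n,n}|^{1/(2n)}\le c^{1/(2n)}\,r$, and letting $n\to\infty$ yields $\displaystyle\lim_{n\to\infty}|c_{n,n}|^{1/(2n)}\le r$ (the limit exists by the Cauchy--Schwarz monotonicity \eqref{C-S} discussed before Proposition \ref{proposicion2}).

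For the second assertion I would invoke Proposition \ref{proposicion2}: the finiteness of $\displaystyle\lim_{n\to\infty}|c_{n,n}|^{1/(2n)}$ forces $\sop(\mu)$ to be compact with $\sup\{|z|:z\in\sop(\mu)\}=\lim_{n\to\infty}|c_{n,n}|^{1/(2n)}\le r$, i.e. $\sop(\mu)\subset\overline{\mathbb{D}}_r(0)$.

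There is essentially no hard step here; the only point requiring a word of care is that Proposition \ref{proposicion2} is stated for positive definite moment matrices, so strictly speaking it applies when $\sop(\mu)$ is infinite. If $\sop(\mu)$ is finite the conclusion is immediate anyway: were there a point $z_0\in\sop(\mu)$ with $|z_0|>r$, then $c_{n,n}\ge |z_0|^{2n}\mu(\{z_0\})$ would give $\lim_n |c_{n,n}|^{1/(2n)}\ge |z_0|>r$, contradicting the bound just obtained. So in either case $\sop(\mu)\subset\overline{\mathbb{D}}_r(0)$.
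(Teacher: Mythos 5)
Your proposal is correct and follows essentially the same route as the paper: apply Proposition \ref{proposicion1}(i), test on the monomials $z^n$ to get $c_{n,n}\le c\,r^{2n}$, deduce $\lim_n |c_{n,n}|^{1/(2n)}\le r$, and conclude via Proposition \ref{proposicion2}. The extra remark handling the finite-support case is a small (and reasonable) refinement the paper glosses over, but it does not change the argument.
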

\begin{proof} Assume $\beta(\mu,{\bf m}_r)<\infty$, by Proposition \ref{proposicion1} there exists a constant $c>0$ verifying that for every polynomial, and in particular for polynomials  $z^n$ with $n\in \NN_{0}$ 

$$
c_{n,n} =
\int \vert z \vert^{2n} \; d\mu \leq c \int \vert z \vert^{2n} \; d{\bf m}_{r}= c \; r^{2n}.
$$

\noindent Therefore $\displaystyle{\lim_{n\to \infty}    \vert c_{n,n}\vert^{\frac{1}{2n}} \leq r}$ and  by   Proposition \ref{proposicion2} $\sop(\mu) \subset \overline{\mathbb{D}}_{r}(0)$.

\end{proof}

\begin{remark} Note that  $\displaystyle{\lim_{n\to \infty} 
  \vert c_{n,n}\vert^{\frac{1}{2n}} =r}$  is a weaker condition than $\beta(\mu,{\bf m}_{r})<\infty$. Indeed, consider a measure with support in $\mathbb{T}$ given by $d\mu(\theta)=w(\theta)d\theta$ with $w(\theta) \in L^{1}(-\pi,\pi)$   with $\essupp w(\theta)=\infty$ being $\essupp w(\theta)$ the essential upper bound, that is, the smallest number $C>0$ such that $w(\theta)\leq C$ a.e. By proposition $2$ it follows that  $\displaystyle{\lim_{n\to \infty} 
  \vert c_{n,n}\vert^{\frac{1}{2n}} =1}$. Nevertheless, $\beta(\mu,{\bf m})=\infty$; indeed,  by the results in  \cite{Grenader-Szego}  $\displaystyle{\lim_{n\to \infty} \beta_n=\essupp
  w(\theta)=\infty}$ being   $\beta_n$  the largest eigenvalue of the truncated matrix of $\mathbf{M}(\mu)$ of size $(n+1)\times (n+1)$.

\end{remark}

\noindent As it is established in Corollary \ref{corolario1}  if
$\beta(\mu , {\bf m}_{r})<\infty$  then $\sop(\mu) \subset \overline{\mathbb{D}}_{r}(0)$ which is not the support of ${\bf m}_r$ although it is, in particular, its convex envelope.  This result motivates the following problem:
Consider two compactly supported  $\mu_1,\mu_2$  measures verifying  that 
$\beta(\mu_1,\mu_2)<\infty $, we wonder if there is a relationship between the corresponding supports of $\mu_1,\mu_2$. Of course, it is not true in general that   $\sop(\mu_1) \subset \sop(\mu_2)$. Indeed, consider the Lebesgue measures  ${\bf m}$  and ${\bf m}_{\frac{1}{2}}$ in Example 2 verifying $\beta({\bf m}_{\frac{1}{2}}, {\bf m})=1$, nevertheless  $\sop({\bf m}_{\frac{1}{2}})= \frac{1}{2}\mathbb{T} $.

\noindent

\bigskip

\noindent As an answer of the above problem our  main  result in this section establishes the following 
relationship: if   $\beta(\mu_1,\mu_2)<\infty$ then  the support of $\mu_1$ must be contained in 
the {\it polynomially convex hull} of the  support of $\mu_2$.

\bigskip

\noindent We recall that ( see e.g.   \cite{Conwayfuncional})  
for a compact set  $K$ in the complex plane the {\it polynomially convex 
hull } of $K$, denoted $\Pc{(K)}$, is defined as:
$$
\Pc(K):=\{ z\in \CC:  \; \vert p(z) \vert \leq \max_{\xi \in K}\vert p(\xi) \vert,  \; \text{for all } p(z)\in \mathbb{P}[z] \}.
$$

\noindent A compact set is said to be {\it polynomially convex} if $K=\Pc(K)$. Obviously, $K \subset \Pc(K)$. In the complex plane polynomial convexity turns to be a purely topological notion. Indeed, using the maximum modulus principle and the Runge approximation theorem, one proves that a compact set is polynomially convex if and only if $\CC \setminus K$ is connected 

\begin{theorem} \label{teorema1}Let $\mu_1,\mu_2$ be compactly supported measures verifying  $\beta(\mu_1,\mu_2)<\infty$. Then, the support of $\mu_1$ is contained in the {\it polynomially convex hull} of the support of $\mu_2$, i.e., 
$$
\sop(\mu_1) \subset \Pc(\sop(\mu_2)).
$$
\end{theorem}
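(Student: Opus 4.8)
The plan is to prove the contrapositive pointwise: I will show that any $z_0\in\CC$ with $z_0\notin\Pc(\sop(\mu_2))$ cannot belong to $\sop(\mu_1)$. The only ingredient I need from the hypothesis $\beta(\mu_1,\mu_2)<\infty$ is its analytic reformulation given by Proposition \ref{proposicion1}(i): there is a constant $c>0$ such that $\int|p(z)|^2\,d\mu_1\le c\int|p(z)|^2\,d\mu_2$ for every $p(z)\in\mathbb{P}[z]$.

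So I would first fix $z_0\notin\Pc(\sop(\mu_2))$. By the definition of the polynomially convex hull there is a polynomial $q(z)$ with $|q(z_0)|>\max_{\xi\in\sop(\mu_2)}|q(\xi)|$, the maximum being attained and finite because $\sop(\mu_2)$ is compact. After multiplying $q$ by a suitable positive constant I may assume $M:=\max_{\xi\in\sop(\mu_2)}|q(\xi)|<1<|q(z_0)|$. Next I would test the inequality of Proposition \ref{proposicion1}(i) on the powers $q(z)^k$, $k\in\NN$: since $|q(z)|^{2k}\le M^{2k}$ on $\sop(\mu_2)$ and $\mu_2$ has finite total mass, this gives
$$\int|q(z)|^{2k}\,d\mu_1\;\le\;c\int|q(z)|^{2k}\,d\mu_2\;\le\;c\,M^{2k}\,\mu_2(\CC)\;\xrightarrow[k\to\infty]{}\;0,$$
because $M<1$.

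Finally I would derive a contradiction from the assumption $z_0\in\sop(\mu_1)$. Since $q$ is continuous and $|q(z_0)|>1$, there are $r>0$ and $\epsilon>0$ with $|q(z)|\ge 1+\epsilon$ for all $z\in\mathbb{D}_r(z_0)$; and $z_0\in\sop(\mu_1)$ forces $\mu_1(\mathbb{D}_r(z_0))>0$. Hence $\int|q(z)|^{2k}\,d\mu_1\ge(1+\epsilon)^{2k}\mu_1(\mathbb{D}_r(z_0))\to\infty$, contradicting the bound of the previous step. Therefore $z_0\notin\sop(\mu_1)$, and since $z_0$ was an arbitrary point outside $\Pc(\sop(\mu_2))$, we conclude $\sop(\mu_1)\subset\Pc(\sop(\mu_2))$.

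I do not expect a genuine obstacle here once Proposition \ref{proposicion1} is available; the only points that need a little care are the harmless rescaling of $q$ so that $M<1<|q(z_0)|$, and the use of compactness of $\sop(\mu_2)$ together with the standing hypotheses (positive measures with finite moments) to guarantee both that $\max_{\sop(\mu_2)}|q|$ is finite and attained and that $\mu_2(\CC)<\infty$. It is worth noting in a remark that the conclusion cannot be improved to $\sop(\mu_1)\subset\sop(\mu_2)$, as the example of ${\bf m}_{1/2}$ and ${\bf m}$ already shows.
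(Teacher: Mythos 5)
Your proof is correct and follows essentially the same strategy as the paper: both arguments feed high powers of a separating polynomial into the integral inequality of Proposition \ref{proposicion1}(i) and derive a contradiction from a point of $\sop(\mu_1)$ lying outside $\Pc(\sop(\mu_2))$, where the polynomial is strictly larger than on $\sop(\mu_2)$. The only (minor) difference is that you obtain the separating polynomial directly from the definition of the polynomially convex hull and a harmless rescaling, whereas the paper normalizes a peak polynomial via a cited result of Forstneri\v{c}--Stout; your route is slightly more self-contained but changes nothing essential.
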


\begin{proof} Denote by $K_i={\it supp}(\mu_i)$, $i=1,2$. By \ref{proposicion1} it follows that 
 for every polynomial $p(z)$ it follows that
\begin{equation} \label{pol}
\int \vert p(z) \vert^2 d\mu_{1}  \leq  \int  \vert p(z) \vert^2 d\mu_{2} .
\end{equation}

\noindent Assume that  $K_1$ is not a subset of $\Pc(K_2)$. Then, there exists $z_1\in K_1$ such that $z_1 \notin \Pc(K_2)$. Since $ \Pc(K_{2})$ is polynomially convex from the results in  \cite{Forstneric} it follows that  there is a polynomial $p_0(z)\in \mathbb{P}[z]$ such that $p_0(z_1)=1$ and $ \vert p_0(z) \vert <1$ for every $z\in \Pc(K_{2})$. By compactness of $\Pc(K_{2})$  there exists $0 \leq \alpha <1$ such that
$\vert p_0(z) \vert \leq \alpha$ for all $z\in \Pc(K_{2})$. Next, since  $p(z_1)=1$ there exists $r>0$ such that  $\vert p_0(z) \vert > \frac{1+\alpha}{2}$ for all $z\in B_{r}(z_1)$. On the other hand, since  $z_1 \in K_1$ it follows that  $\mu_1(B_{r}(z_1))>0$. Therefore, by applying inequality (\ref{pol}) to the polynomials  $p_0^n(z)$ with  $n\in \NN$ we obtain that
$$
\left(\frac{1+\alpha}{2}\right)^n   \mu_1(B_{r}(z_1)) \leq \int_{K_1} \vert p_0(z)\vert^n d\mu_1 \leq \int_{K_2}  \vert p_0(z)\vert^n d\mu_2 \leq \alpha^{n}\mu_2(K_2).
$$

\noindent That means that there exists $C>0$ such that
$
\left(\frac{(1+\alpha)/2}{\alpha}\right)^n \leq C
$ for every  $n\in \NN$,
which is not possible since  $\frac{1+\alpha}{2}>\alpha$.
\end{proof}

\bigskip

\begin{remark}  The converse of this result is not true. Indeed, consider $\mu_1={\bf m}$ and 
the measure in the unit circle  $\mu_2
(\theta)=(1+\cos \theta)d\theta$ which apperars in  \cite{EGT1}. It is clear that
 $\sop(\mu_1) \subset \Pc(\sop(\mu_2))$ and nevertheless  $\beta(\mu_1,\mu_2)=\infty$. Indeed, by \cite{EGT1}  if $\mathbf{T}$ is the Toeplitz matrix associated with 
$\mu_2$ then 
$\lambda(\mu_2,\mu_1)=\displaystyle{\lim_{n\to 
\infty} \lambda_n(\mathbf{T}_n)}=0$ and therefore $\beta(\mu_1,\mu_2)=\infty$ by Remark 2.

\end{remark}

\noindent As a consequence of theorem \ref{teorema1} and remark 2  we have the following result

\begin{corollary} Let $\mu_1,\mu_2$ two compactly supported measures with infinite support with  polynomially convex supports.  If   $0<\lambda(\mu_1,\mu_2)\leq \beta(\mu_1,\mu_2)<\infty$ then the supports of both measures coincide.
\end{corollary}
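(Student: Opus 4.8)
The plan is to apply Theorem~\ref{teorema1} twice, once in each direction, together with the symmetry relation of Remark~2 between $\lambda$ and $\beta$. First I would observe that the hypothesis $\beta(\mu_1,\mu_2)<\infty$ is exactly the hypothesis of Theorem~\ref{teorema1}, which yields
\[
\sop(\mu_1)\subset \Pc(\sop(\mu_2)).
\]
Since $\sop(\mu_2)$ is assumed polynomially convex, $\Pc(\sop(\mu_2))=\sop(\mu_2)$, and hence $\sop(\mu_1)\subset\sop(\mu_2)$.

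For the reverse inclusion I would use the other half of the hypothesis, $\lambda(\mu_1,\mu_2)>0$. Because both measures have infinite support, their moment matrices are HPD, so Remark~2 applies and gives
\[
\lambda(\mu_1,\mu_2)=\frac{1}{\beta(\mu_2,\mu_1)}.
\]
Thus $\lambda(\mu_1,\mu_2)>0$ is equivalent to $\beta(\mu_2,\mu_1)<\infty$. Now I would invoke Theorem~\ref{teorema1} again, this time with the roles of $\mu_1$ and $\mu_2$ interchanged, to conclude $\sop(\mu_2)\subset\Pc(\sop(\mu_1))$. Since $\sop(\mu_1)$ is also polynomially convex, this gives $\sop(\mu_2)\subset\sop(\mu_1)$.

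Combining the two inclusions, $\sop(\mu_1)=\sop(\mu_2)$, which is the assertion. There is essentially no technical obstacle here: the work is entirely in Theorem~\ref{teorema1} and in the already-established identity of Remark~2; the only points requiring a word of care are (a) checking that the inequality $0<\lambda(\mu_1,\mu_2)\le\beta(\mu_1,\mu_2)<\infty$ is not vacuous and that the relevant matrices are genuinely positive definite so that Remark~2 is legitimately available (guaranteed by the infinite-support assumption, as noted just before Definition~\ref{definicion7}), and (b) making explicit the trivial fact that polynomial convexity of a compact set $K$ means precisely $K=\Pc(K)$, so that the inclusions into the polynomially convex hulls collapse to inclusions of the supports themselves. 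I expect the write-up to be only a few lines.
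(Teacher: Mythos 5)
Your argument is correct and is exactly the route the paper intends: it states the corollary ``as a consequence of Theorem~\ref{teorema1} and Remark~2,'' i.e.\ apply Theorem~\ref{teorema1} to get $\sop(\mu_1)\subset\Pc(\sop(\mu_2))$, use Remark~2 to turn $\lambda(\mu_1,\mu_2)>0$ into $\beta(\mu_2,\mu_1)<\infty$ and apply the theorem again, and let polynomial convexity collapse the hulls. Nothing is missing.
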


\noindent It is interesting to point out that this result provides a tool to recognize the support of a measure comparing this measure, with our indexes, with another well known measure.  In particular, we obtain an application to find Jordan curves  in the support of a measure:

\begin{corollary} Let  $\Gamma$  be a Jordan curve and $\nu_{\Gamma}$ any measure  such that $\sop(\nu_{\Gamma})=\Gamma$. 
 Let $\mu$ be other measure such that verifies
$$
0< \lambda(\mu,\nu_{\Gamma}) \leq \beta(\mu,\nu_{\Gamma})<\infty.
$$
\noindent Then,
 $\Gamma \subset \sop(\mu) \subset \Pc{(\Gamma)}.$
 \end{corollary}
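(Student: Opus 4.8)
The plan is to derive both inclusions from Theorem~\ref{teorema1}, applied once in each direction, and then to turn the weaker containment $\Gamma\subset\Pc(\sop(\mu))$ into $\Gamma\subset\sop(\mu)$ by a planar topology argument. First I would record two properties of $\mu$. Since $\beta(\mu,\nu_{\Gamma})<\infty$, Proposition~\ref{proposicion1}(i) gives $c>0$ with $\int|p(z)|^2\,d\mu\le c\int|p(z)|^2\,d\nu_{\Gamma}$ for all $p\in\mathbb{P}[z]$; taking $p(z)=z^{n}$ and using that $\nu_{\Gamma}$ lives on the compact set $\Gamma$ yields $|c_{n,n}|\le c\,\nu_{\Gamma}(\Gamma)\,R^{2n}$ with $R=\max_{z\in\Gamma}|z|$, so $\lim_{n}|c_{n,n}|^{1/2n}\le R<\infty$ and $\sop(\mu)$ is compact by Proposition~\ref{proposicion2} (this is exactly the argument of Corollary~\ref{corolario1}). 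Also $\sop(\mu)$ is infinite: if it were a finite set $\{w_{1},\dots,w_{k}\}$, then $q(z)=\prod_{j=1}^{k}(z-w_{j})$ is a nonzero polynomial with $\int|q|^2\,d\mu=0$ and $\int|q|^2\,d\nu_{\Gamma}>0$ (a nonzero polynomial cannot vanish $\nu_{\Gamma}$-a.e., since $\sop(\nu_{\Gamma})$ is infinite), contradicting $\lambda(\mu,\nu_{\Gamma})>0$ via Proposition~\ref{proposicion1}(ii); hence $\mathbf{M}(\mu)>0$ and every index below is well defined.

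With these facts in hand I would apply Theorem~\ref{teorema1} twice. Taking $\mu_{1}=\mu$, $\mu_{2}=\nu_{\Gamma}$, the hypothesis $\beta(\mu,\nu_{\Gamma})<\infty$ gives immediately $\sop(\mu)\subset\Pc(\Gamma)$, which is already one half of the claim. For the reverse, both $\mu$ and $\nu_{\Gamma}$ have infinite support, so Remark~2 yields $\beta(\nu_{\Gamma},\mu)=1/\lambda(\mu,\nu_{\Gamma})<\infty$; applying Theorem~\ref{teorema1} with $\mu_{1}=\nu_{\Gamma}$, $\mu_{2}=\mu$ then gives $\Gamma=\sop(\nu_{\Gamma})\subset\Pc(\sop(\mu))$.

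It remains to strengthen this last containment, which is the crux. Write $K=\sop(\mu)$, let $D$ be the bounded component of $\CC\setminus\Gamma$ (so by the Jordan curve theorem $\Pc(\Gamma)=\overline{D}$ and $\Gamma=\partial D=\partial(\CC\setminus\overline{D})$), and recall that $\Pc(K)$ is the union of $K$ with the bounded components of $\CC\setminus K$, i.e.\ $\Pc(K)=\CC\setminus U_{\infty}$ where $U_{\infty}$ is the unbounded component of $\CC\setminus K$. Since $K\subset\overline{D}$, the set $\CC\setminus\overline{D}$ is connected, unbounded, and disjoint from $K$, hence $\CC\setminus\overline{D}\subset U_{\infty}$. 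Now suppose some $z_{0}\in\Gamma$ were not in $K$; then $z_{0}$ has a connected open neighborhood $B\subset\CC\setminus K$, and because $z_{0}\in\partial(\CC\setminus\overline{D})$ this $B$ meets $\CC\setminus\overline{D}\subset U_{\infty}$, which (as $B$ is connected) forces $B\subset U_{\infty}$ and hence $z_{0}\notin\Pc(K)$, contradicting $\Gamma\subset\Pc(K)$. Therefore $\Gamma\subset K=\sop(\mu)$, and together with $\sop(\mu)\subset\Pc(\Gamma)$ this is exactly $\Gamma\subset\sop(\mu)\subset\Pc(\Gamma)$. I expect no serious difficulty here: the real content is Theorem~\ref{teorema1}, the final topological upgrade is elementary, and the only point that genuinely needs care is checking beforehand that $\mu$ is compactly and infinitely supported so that Theorem~\ref{teorema1} and Remark~2 may be invoked in both directions.
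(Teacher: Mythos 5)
Your proposal is correct and follows essentially the same route as the paper: apply Theorem \ref{teorema1} in both directions (using Remark 2 to get $\beta(\nu_{\Gamma},\mu)=1/\lambda(\mu,\nu_{\Gamma})<\infty$) and then upgrade $\Gamma\subset\Pc(\sop(\mu))$ to $\Gamma\subset\sop(\mu)$ by an elementary planar-topology argument, your version of that last step (showing a hypothetical $z_{0}\in\Gamma\setminus\sop(\mu)$ lies in the unbounded component of the complement of $\sop(\mu)$) being interchangeable with the paper's (removing a small disk around $z_{1}$ and noting the remainder is still polynomially convex). Your preliminary verification that $\sop(\mu)$ is compact and infinitely supported is a detail the paper leaves implicit but is indeed needed to invoke Theorem \ref{teorema1} and Remark 2.
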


\smallskip

\begin{proof} Since $\beta(\mu,\nu_{\Gamma})<\infty$ by  Theorem 1  it follows that $\sop(\mu) \subset \Pc(\nu_{\Gamma})= \mathring{\Gamma}\cup \Gamma$. On the other hand by Remark 2 it follows that  $\beta(\nu_{\Gamma},\mu)=\dfrac{1}{\lambda(\mu,\nu_{\Gamma})}<\infty$, and again by using Theorem 1  
$$
\Gamma  = \sop(\nu_{\Gamma}) \subset \Pc(\sop(\mu)).
$$

\noindent Assume that $\Gamma$ is not contained in $\sop(\mu)$. Then there existe $z_1\in \Gamma$ such that $z_1\notin \sop(\mu)$. Then, 
$\sop(\mu)\subset \Gamma \setminus \mathbb{D}_s(z_1)$ for some $s>0$. Then, by  the definition of the polinomially convex hull 

$$\Gamma \subset \Pc(\sop(\mu))\subset \Pc\left((\mathring{\Gamma}\cup\Gamma) \setminus \mathbb{D}_s(z_1)\right)=
(\mathring{\Gamma}\cup\Gamma) \setminus \mathbb{D}_s(z_1)
$$

\noindent which is not possible. 
\end{proof}

\section{An algebraical approach of the convex envelope of a measure}

\noindent In the sequel we are concerned with the convex hull of the support  of a measure. 
 
We obtain  an algebraic description  of such set  in terms of Rayleigh quotients of a certain matrix (not necessarily Hermitian) related  with the associated moment matrix. In order to obtain the main result in this section  we need some definitions and lemmas.

\begin{definition}  Let $T$ be a bounded operator in a Hilbert space $\mathcal{H}$. 
The {\sl numerical range} of  $T$ is defined as
 $$
 W(T)=\{<Tx,x>: \;  x\in \mathcal{H}, \Vert x \Vert=1\}$$ 
\end{definition}

\begin{lem} Let $T$ be a bounded operator in a Hilbert space $\mathcal{H}$.  Assume that  $Y$ is dense set in $\mathcal {H}$,
then
\[ \overline{W}(T) =\overline{ \{\langle T y, y\rangle: \;  y \in Y, \|y\|=1\} }.\]
\end{lem}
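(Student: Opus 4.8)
The plan is to establish the two inclusions separately. One direction is immediate: if $y\in Y$ with $\|y\|=1$, then $y$ is in particular a unit vector of $\mathcal H$, so $\langle Ty,y\rangle\in W(T)$; hence $\{\langle Ty,y\rangle:\ y\in Y,\ \|y\|=1\}\subseteq W(T)$, and taking closures gives $\overline{\{\langle Ty,y\rangle:\ y\in Y,\ \|y\|=1\}}\subseteq\overline{W}(T)$.

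For the reverse inclusion, since the right-hand side is closed it suffices to show that $W(T)$ itself is contained in it, i.e.\ that every value $\langle Tx,x\rangle$ with $x\in\mathcal H$, $\|x\|=1$, is a limit of values $\langle Ty,y\rangle$ with $y\in Y$, $\|y\|=1$. I would first use density of $Y$ to pick a sequence $(y_n)_n\subseteq Y$ with $y_n\to x$. Since $\|y_n\|\to\|x\|=1$, we have $y_n\neq 0$ for all large $n$, and — using that $Y$ is a (dense) linear subspace — the normalized vectors $x_n:=y_n/\|y_n\|$ again lie in $Y$, have unit norm, and still satisfy $x_n\to x$.

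Finally I would pass to the limit: $T$ being bounded, $Tx_n\to Tx$, and by (joint) continuity of the inner product $\langle Tx_n,x_n\rangle\to\langle Tx,x\rangle$. Thus $\langle Tx,x\rangle$ belongs to the closure of $\{\langle Ty,y\rangle:\ y\in Y,\ \|y\|=1\}$, which finishes the proof. There is essentially no hard step here; the only point that requires care is that normalizing the approximating sequence must keep us inside $Y$, which is precisely why $Y$ is taken to be a dense \emph{subspace} of $\mathcal H$ rather than merely a dense subset (in the applications one has $Y=\mathbb{P}[z]$).
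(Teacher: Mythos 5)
Your proof is correct and follows essentially the same route as the paper: one inclusion is trivial, and the other comes from approximating unit vectors by elements of $Y$ and using boundedness of $T$ together with the continuity of $y\mapsto\langle Ty,y\rangle$ on bounded sets (your reduction to showing $W(T)\subseteq\overline{\{\langle Ty,y\rangle:\ y\in Y,\ \|y\|=1\}}$ via closedness of the right-hand side is a mild streamlining of the paper's argument, which instead writes an arbitrary point of $\overline{W}(T)$ as $\lim_n\langle Tx_n,x_n\rangle$ and perturbs each $x_n$). Your explicit normalization $x_n=y_n/\|y_n\|$, and the accompanying remark that $Y$ must be stable under scaling --- e.g.\ a dense subspace such as $\mathbb{P}[z]$, which is what the application uses --- is in fact a point the paper's proof glosses over: there the approximants $y_n$ satisfy only $\|y_n\|\to 1$ rather than $\|y_n\|=1$, so strictly they need not lie in the set whose closure is being taken.
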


\begin{proof}
Obviously
\[ \overline{ \{\langle T y, y\rangle: \;  y \in Y, \|y\|=1\}} \subset \overline{W}(T).\]
On the other hand, assume $z_{0} \in \overline{W}(T)$, then there is a sequence $\{x_{n}\}_{n=1}^{\infty}$ with $\Vert x_n \Vert=1$, such that $z_{0}=\displaystyle{ \lim_{n\to \infty} \langle Tx_{n}, x_{n} \rangle}$. Since $Y$ is a dense set in $\mathcal {H}$, there is a sequence
$\{y_{n}\}_{n=1}^{\infty}$ such that $\displaystyle{\lim_{n \to \infty} \|x_{n}-y_{n}\|=0}$; in particular, $\{y_n\}_{n=1}^{\infty}$ is a bounded sequence. Therefore,
\begin{multline*}
|\langle Tx_{n},x_{n} \rangle - \langle Ty_{n},y_{n}\rangle | = |\langle Tx_{n},x_{n}-y_{n} \rangle +
\langle Tx_{n}-Ty_{n},y_{n} \rangle |\leq \\
\leq \|T\| \; \|x_{n}\| \; \|x_{n}-y_{n}\|+\|T\|\; \|x_{n}-y_{n}\| \; \|y_{n}\| \to 0 \mbox{ if } n \to \infty.
\end{multline*}
Consequently, $z_{0}= \displaystyle{\lim_{n \to \infty} \langle Ty_{n}, 
y_{n} \rangle} \in  \overline{ \{\langle T y, y\rangle: \;  y \in 
Y, \|y\|=1\}}$.
\end{proof}
\bigskip

\begin{theorem}
Let $\mathbf{M}=(c_{i,j})_{i,j=0}^{\infty}$ be a moment matrix with $\displaystyle{\lim_{n\to \infty}   \vert c_{n,n}\vert^{\frac{1}{2n}} < \infty}$ and let $\mu$ be the representing   measure. Let ${\bf M}^{\prime}$ be the infinite matrix obtained by removing the first row in  ${\bf M}$, then 

\begin{equation} \label{1}
 \overline{\left \{ \frac{v {\bf M}^{\prime}v^{*}}{v {\bf M}
v^{*}} : v \in c_{00} \setminus \{0\}\right\}} =Co(\sup(\mu)).
\end{equation}
\end{theorem}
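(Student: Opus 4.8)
The plan is to identify the set on the left of (\ref{1}) with the closure of the numerical range of the operator $M_z\colon p(z)\mapsto z\,p(z)$ acting on $P^2_{\mathbf M}$, and then to compute that numerical range by means of the spectral theory of normal (more precisely, subnormal) operators.

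\emph{Reduction.} First I would record the elementary identity behind everything: if $p(z)=\sum_k v_kz^k$ and $v=(v_0,v_1,\dots)\in c_{00}$, then expanding $z\,p(z)\overline{p(z)}$ and using $c_{i,j}=\int z^i\bar z^j\,d\mu$ gives
\[
v\mathbf{M}'v^{*}=\sum_{k,j}v_k\overline{v_j}\,c_{k+1,j}=\int z\,|p(z)|^2\,d\mu,\qquad v\mathbf{M}v^{*}=\int|p(z)|^2\,d\mu ,
\]
the second being (\ref{identification}). Since $\mathbf M$ is positive definite, $\sop(\mu)$ is infinite, so $v\mathbf M v^{*}>0$ for $v\neq 0$ and every quotient is defined; and since $\lim_n|c_{n,n}|^{1/2n}<\infty$, $\sop(\mu)$ is compact by Proposition \ref{proposicion2}. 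Hence, through (\ref{identification}), $P^2_{\mathbf M}$ is isometrically the closure $P^2(\mu)$ of $\mathbb P[z]$ in $L^2(\mu)$, the subspace $P^2(\mu)$ is invariant under multiplication by $z$, and $M_z$ extends to a bounded operator on it. Consequently the left side of (\ref{1}), before closure, is exactly $\{\langle M_z p,p\rangle_\mu : p\in\mathbb P[z],\ \|p\|_\mu=1\}$; since $\mathbb P[z]$ is dense in $P^2(\mu)$, the preceding Lemma shows this has closure $\overline W(M_z)$, with $M_z$ now viewed on all of $P^2(\mu)$. So it remains to prove $\overline W(M_z)=\Co(\sop\mu)$.

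\emph{The two inclusions.} The inclusion $\overline W(M_z)\subseteq\Co(\sop\mu)$ is soft: each quotient $\int z|p|^2d\mu/\int|p|^2d\mu$ is the barycentre of the probability measure $|p|^2d\mu/\int|p|^2d\mu$ supported on $\sop(\mu)$, hence lies in $\Co(\sop\mu)$, which is compact by Carath\'eodory's theorem and therefore closed. For the reverse inclusion I would use that $M_z$ on $P^2(\mu)$ is subnormal, being the restriction of the normal operator ``multiplication by $z$'' on $L^2(\mu)$ to its invariant subspace $P^2(\mu)$; moreover, by the Stone--Weierstrass theorem on the compact set $\sop\mu$ the polynomials in $z$ and $\bar z$ span a dense subspace of $L^2(\mu)$, so $M_z$ on $L^2(\mu)$ is the \emph{minimal} normal extension. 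The spectral inclusion theorem for subnormal operators then gives $\sop(\mu)=\sigma\big(M_z\text{ on }L^2(\mu)\big)\subseteq\sigma\big(M_z\text{ on }P^2(\mu)\big)$; since $\sigma(T)\subseteq\overline W(T)$ for every bounded operator and $\overline W(T)$ is convex (Toeplitz--Hausdorff), it follows that $\overline W(M_z)\supseteq\overline{\Co}(\sop\mu)=\Co(\sop\mu)$. Together with the reduction this establishes (\ref{1}).

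\emph{Main obstacle.} The soft inclusion is trivial; the real content is $\Co(\sop\mu)\subseteq\overline W(M_z|_{P^2(\mu)})$, and there an abstract argument seems unavoidable, since one cannot concentrate $|p|^2\,d\mu$ near, say, an interior point of $\sop\mu$. If one prefers to avoid quoting the subnormal spectral inclusion theorem, this inclusion can also be obtained by hand: by the Krein--Milman theorem it suffices to reach every exposed point $z_0$ of $\Co(\sop\mu)$; after a rotation $z_0$ is the unique point of $\sop\mu$ of maximal real part, and choosing polynomials $p_N$ converging uniformly on $\sop\mu$ to $e^{\lambda z}$ one checks that $\int z|p_N|^2d\mu\big/\int|p_N|^2d\mu$ approximates $\int z\,e^{2\lambda\operatorname{Re}z}d\mu\big/\int e^{2\lambda\operatorname{Re}z}d\mu$, which converges to $z_0$ as $\lambda\to\infty$ because the measures $e^{2\lambda\operatorname{Re}z}d\mu\big/\int e^{2\lambda\operatorname{Re}z}d\mu$ converge weakly to $\delta_{z_0}$. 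Either way, the Hessenberg matrices mentioned in the introduction play only the role of giving the concrete matrix model of $M_z$ in the orthonormal-polynomial basis.
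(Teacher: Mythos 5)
Your proof is correct and follows essentially the same route as the paper: identify the quotient set with the numerical range of multiplication by $z$ on $P^{2}(\mu)$ via the density lemma, get the hard inclusion from subnormality, the spectral inclusion theorem for the minimal normal extension, $\sigma(T)\subset\overline{W}(T)$ and Toeplitz--Hausdorff. The only (harmless) divergences are that you prove the easy inclusion directly as a barycentre statement where the paper routes it through Fej\'er's theorem on zeros of the degree-one orthogonal polynomial, and that you actually justify minimality of the normal extension (via Stone--Weierstrass), which the paper merely asserts.
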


\begin{proof}
Let's see first in (\ref{1}) the inclusion from left to right. We will call $\Omega \subset \CC$ the set of complex numbers of the form
\[ \Omega = \left \{  \frac{v {\bf M}^{\prime}v^{*}}{v {\bf M}
v^{*}}: v \in c_{00} \setminus \{0\}\right\}\].

\noindent By using (\ref{identification}), if $p(z)=v_0+v_1z+\dots+v_nz^n$, and $v=(v_0,v_1,\dots,0,\dots)\in c_{00}$,

$$
\int z\vert p(z)\vert^{2}\; d\mu=
\int zp(z)\overline{p(z)}\; d\mu = v\mathbf{S}_{L}
\mathbf{M}v^{*}=v\mathbf{M}'v^{*}
$$

\noindent where $\mathbf{S}_{L}$ is the shift left matrix and $\mathbf{M}'=\mathbf{S}_{L}
\mathbf{M}$ is the infinite matrix obtained by removing the first row in $\mathbf{M}$. Therefore, $ \Omega$ can be rewritten as

\[ \Omega = \left \{ \frac{\int z |q(z)|^2 d\mu}{\int |q(z)|^2 d\mu}: q(z) \in \mathbb{P}[z] \setminus \{0\} \right \}.  \]
Let be $z_{0} \in \Omega$, by definition there exists  a polynomial $q(z) \in \mathbb{P}[z] $ such that
\[ z_{0} = \frac{\int z |q(z)|^2 d\mu}{\int |q(z)|^2 d\mu}.\]
This means that $z_{0}$ can be interpreted as the center of gravity of the weight function $w_{q}(z)=|q(z)|^2 d\mu$. In 
particular, if we consider the associated moment matrix with respect to  such measure denoted by  $\mathbf{M}(w_{q})=(\widetilde{c}_{i,j})_{i,j=0}^{\infty}$ it follows that $z_{0}$ is the zero of the  orthogonal polynomial of degree one  associated to the measure $w_{q}$ which is 
\[ \det \left (
\begin{array}{cc}
\widetilde{c}_{00} & 1 \\
\widetilde{c}_{10} & z
\end{array}
\right ) = z \widetilde{c}_{00}- \widetilde{c}_{10} = 0,\]
\noindent being
\[ z_{0} = \frac{\widetilde{c}_{10}}{\widetilde{c}_{00}}= \frac{\int z |q(z)|^2 d\mu}{\int |q(z)|^2 d\mu}.\]
As it is well known by Fejer's Theorem  ( see e.g. \cite{gaier}) 
  the zeros of the orthogonal polynomials are included in the convex envelope of the support
of the measure, and this proves the inclusion $\overline{\Omega} \subset \Co(\sop(\mu))$.

\noindent It remains to prove the reverse inclusion. Consider the multiplication by 
$z$-operator $\Delta_{z}:L^{2}(\mu) \to
L^{2}(\mu)$, given by $\Delta_{z}(p(z))=zp(z)$. This is a normal operator and it is well known (see e.g. \cite{Conway}) that the spectrum of this operator is the support of the measure $\mu$; i.e. $\sigma(\Delta_{z})= \sop(\mu)$.

\noindent Consider now the restriction operator of  $\Delta_{z}$ to  $P^{2}(\mu)$, $\mathcal {D}: P^{2}(\mu) \to
P^{2}(\mu)$; this is a subnormal operator which minimal normal extension 
is $\Delta_{z}$. By one of the basic properties relating the spectrum of a subnormal operator (see e.g.  \cite{Conway}), the spectrum of the normal
extension of an operator is contained in the spectrum of the operator, consequently
\[ \sop(\mu) \subset \sigma(\Delta_{z}) \subset \sigma(\mathcal {D}).\]

\noindent Now we use that  spectrum of an operator lies in the closure of its numerical range, that is  $\sigma(\mathcal {D}) \subset \overline{W}(\mathcal {D})$. On the other hand, the famous Toeplitz-Hausdorff theorem  (see e.g.  \cite{halmos}) 
 asserts that the numerical range of a bounded operator is always a convex set (no necessarily closed). Thus, the convex hull of the spectrum of an operator lies 
in the closure of the numerical range, therefore

\[ \Co(\sop(\mu)) \subset \Co(\sigma(\mathcal {D}) )\subset \overline{W}(\mathcal {D}).\]
Then, by lemma above, since $\mathbb{P}[z]$ is dense in the Hilbert space 
$P^{2}(\mu)$ it follows that
\begin{multline*}
\qquad \qquad  \overline{W}(\mathcal {D}) = \overline{\{ \langle \mathcal {D}(p),p \rangle :\;  \langle p, p\rangle=1 , p \in P^{2}(\mu)\}}= \\ =\overline{ \{ \langle zp,p \rangle: \;  p \in \mathbb{P}[z], \langle p,p\rangle =1\}},\qquad \qquad
 \end{multline*}
 and again via the identification (\ref{identification}) it follows
 \[\overline{W}(\mathcal {D}) = \overline{ \left \{ \frac{v {\bf M^{\prime}} v^{*}}{v{\bf M} v^{*}}: v \in c_{00} \setminus \{0\} \right \}}.\]
Consequently
\[ \Co(\sop(\mu)) \subset \overline{W}(\mathcal {D})= \overline{ \left \{ \frac{v {\bf M^{\prime}} v^{*}}{v{\bf M}v^{*}}: v \in c_{00} \setminus \{0\} \right \}}.\]
 as we required.
\end{proof}

\bigskip

\noindent  Given a certain compactly supported measure $\mu$ the matrix representation of the operator $\mathcal{D}$,  introduced in the proof of Theorem 2,  in the space $P^{2}(\mu)$ with respect to the associated basis of orthonormal polynomials is an upper Hessemberg matrix  (see e.g. \cite{EGT-hessemberg}). The following result provides a way to obtain the convex envelope of the support of a measure via such upper Hessemberg  matrix

\begin{corollary}
Let $\mu$ be a  compactly supported measure and let $\mathbf{D}$ be 
the Hessenberg matrix associated to $\mu$. Then:
$$
\Co(\sop(\mu))
=
 \overline{ \{v {\bf D} v^{*}:  v \in c_{00}, vv^{*}=1\}}= \overline{\bigcup_{n=0}^{\infty} W(\mathbf{D_n})}
 $$
\noindent where $\mathbf{D}_n$ is the $n+1$-section of the matrix $\mathbf{D}$.
\end{corollary}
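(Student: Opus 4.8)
The plan is to realize that the Hessenberg matrix $\mathbf{D}$ is merely another matrix representation of the \emph{same} operator $\mathcal{D}$ used in the proof of Theorem 2 --- this time written in the orthonormal polynomial basis of $P^{2}(\mu)$ instead of in the monomial basis --- so that the first equality is essentially a restatement of what is already proved there, while the second is an elementary bookkeeping with finite truncations.

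First I would recall (see e.g. \cite{EGT-hessemberg}) that, since $\mu$ is compactly supported, the multiplication operator $\mathcal{D}\colon P^{2}(\mu)\to P^{2}(\mu)$, $\mathcal{D}p=zp$, is bounded, and that if $\{\phi_{k}\}_{k\ge 0}$ is the sequence of orthonormal polynomials associated with $\mu$, then $\mathcal{D}$ leaves $\mathbb{P}[z]$ invariant (because $\deg(z\phi_{k})=k+1$) and is represented on it by the upper Hessenberg matrix $\mathbf{D}$. Hence, for a polynomial $p$ with coordinate vector $v\in c_{00}$ in the basis $\{\phi_{k}\}$, orthonormality and the identification (\ref{identification}) give $\langle p,p\rangle=vv^{*}$ and $\langle\mathcal{D}p,p\rangle=v\mathbf{D}v^{*}$ (the precise transpose convention fixed for $\mathbf{D}$ being immaterial for the set of values). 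Since $p\mapsto v$ is a bijection of $\mathbb{P}[z]$ onto $c_{00}$, this yields
$$
\{\,v\mathbf{D}v^{*}:v\in c_{00},\ vv^{*}=1\,\}=\{\,\langle\mathcal{D}p,p\rangle:p\in\mathbb{P}[z],\ \|p\|=1\,\}.
$$
For the first equality in the statement, I would then invoke the proof of Theorem 2: there, using the preceding Lemma with $Y=\mathbb{P}[z]$ dense in $P^{2}(\mu)$ together with the subnormality argument for $\mathcal{D}$, it is shown that
$$
\Co(\sop(\mu))=\overline{W}(\mathcal{D})=\overline{\{\,\langle\mathcal{D}p,p\rangle:p\in\mathbb{P}[z],\ \|p\|=1\,\}}.
$$
Combining this with the previous display gives $\Co(\sop(\mu))=\overline{\{v\mathbf{D}v^{*}:v\in c_{00},\ vv^{*}=1\}}$.

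For the second equality, note that any $v\in c_{00}$ with $vv^{*}=1$ is supported on $\{0,1,\dots,n\}$ for some $n$, and for such $v$ only the entries of $\mathbf{D}$ lying in the first $n+1$ rows and columns contribute to $v\mathbf{D}v^{*}$, so $v\mathbf{D}v^{*}=\widehat{v}\,\mathbf{D}_{n}\,\widehat{v}^{*}\in W(\mathbf{D}_{n})$, where $\widehat{v}\in\CC^{n+1}$ is the truncation of $v$; conversely every point of $W(\mathbf{D}_{n})$ arises in this way. Therefore $\{v\mathbf{D}v^{*}:v\in c_{00},\ vv^{*}=1\}=\bigcup_{n=0}^{\infty}W(\mathbf{D}_{n})$ already before taking closures, and the claim follows; one may further note that $\mathbf{D}_{n}$ is the leading $(n+1)\times(n+1)$ block of $\mathbf{D}_{n+1}$, so padding unit vectors with a zero shows $W(\mathbf{D}_{n})\subset W(\mathbf{D}_{n+1})$, and each $W(\mathbf{D}_{n})$ is convex by the Toeplitz--Hausdorff theorem, consistently with the convexity of $\Co(\sop(\mu))$. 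The only delicate point is the first step --- that $\mathbf{D}$ genuinely represents $\mathcal{D}$ on all of $\mathbb{P}[z]$ and that $\mathcal{D}$ is bounded, which is where the compact support of $\mu$ enters; everything afterwards is a transcription of Theorem 2 plus a routine truncation argument.
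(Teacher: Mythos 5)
Your proof is correct and is precisely the argument the paper intends: the paper leaves this corollary unproved, merely remarking beforehand that $\mathbf{D}$ is the matrix of the multiplication operator $\mathcal{D}$ in the orthonormal polynomial basis, and you correctly combine that with the identity $\Co(\sop(\mu))=\overline{W}(\mathcal{D})$ established inside the proof of Theorem 2 (both inclusions: Fej\'er for one direction, subnormality plus Toeplitz--Hausdorff for the other) and a routine truncation argument for the union over finite sections. Your side remarks (the transpose convention being immaterial for the set of values, and the nestedness $W(\mathbf{D}_n)\subset W(\mathbf{D}_{n+1})$) are accurate and harmless.
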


\begin{ejem}
As it is known the identity matrix is the moment matrix associated with ${\bf m}$. 
 On the other hand
the matrix representation of the multiplication operator by $ z $, in the canonical basis of $\ell^2$, is
\[
 {\bf D}= \left ( \begin{array}{cccc}
0 & 0 & 0 & \ldots \\
1 & 0 & 0 & \ldots \\
0 & 1 & 0 & \ldots \\
\vdots & \vdots & \vdots & \ddots
\end{array}
\right ).\]
 We will get the value of
 $W({\bf D}_{n})$.
The projection on $\RR$
of field of values  $W({\bf D}_{n})$ is
\[ \Re({\bf D}_{n})=\frac{{\bf D}_{n}+{\bf D}^{*}_{n}}{2}=\left (
\begin{array}{cccccc}
0 & \frac{1}{2} & 0 & \ldots  &0 & 0 \\
\frac{1}{2} & 0 & \frac{1}{2} & \ldots & 0 & 0  \\
0 & \frac{1}{2} & 0 & \ldots & 0 & 0 \\
\vdots & \vdots & \vdots & \mbox{} & \vdots & \vdots \\
0 & 0 & 0& \ldots  & 0 & \frac{1}{2} \\
0 & 0 & 0  & \ldots  & \frac{1}{2} & 0 \\
\end{array}
\right ).\]
It is well known  that
 $\Re({\bf D}_{n})$ is the $n \times n$  tridiagonal
 secction
of $U_{n}(x)$, i.e. the Jacobi matrix  of orthogonal Tchebyschev polynomials of the second class on
 $[-1,1]$. We have that the
 eigenvalues of $\Re({\bf D}_{n})$ are the zeros of
  $U_{n}(x)$. It is well known that
  $U_{n}(x) = \sin((n+1) \arccos(x))/\sin( n \arccos(x))$.
  We set $n \in \NN$, and obtain
\begin{multline*}
\qquad \lambda_{\max}(\Re({\bf D}_{n})) = \max \{ U_{n}(x)=0 \} = \\
= \max \{\sin((n+1)\arccos(x)) =0\}= \max \{ \cos(\frac{\pi}{n+1}) \}. \qquad
\end{multline*}
  $\Re({\bf D}_{n})$ is a symmetric matrix hence
  $W(\Re({\bf D}_{n}))=[-\cos (\frac{\pi}{n+1}), \cos(\frac{\pi}{n+1})]$,
  and by the symmetry with respect to the origin we finally have
\[W({\bf D}_{n})=\{z \in \mathbb{C} : |z|\leq  \cos(\frac{\pi}{n+1})\}. \]
Therefore
\[ \overline{\bigcup_{n=1}^{\infty} W(\bf{D}_{n})} = \{ z \in \mathbb{C} : |z| \leq 1\},\]
and the corollary is fulfilled
since the convex envelope of support  of the Lebesgue measure
in the unit circumference,
 $\sop({\bf m})=\mathbb{T}$
 and $\Co( \sop({\bf m}))= \overline{\mathbb{D}}_1(0)$.

\end{ejem}

\noindent As a consequence of the above results we obtain the following identity that verifies  infinite moment matrices and which provides a necessary condition to be a moment matrix:

\begin{corollary} Let $\mathbf{M}=(c_{i,j})_{i,j=0}^{\infty}$ be a positive definite moment matrix and let  $\widehat{\mathbf{M}}$ be the matrix obtained by removing the first row and the first column. Then, 
$$
\beta(\widehat{\mathbf{M}},\mathbf{M})=\lim_{n\to \infty} 
\vert c_{n,n} \vert^{1/n}
$$
\end{corollary}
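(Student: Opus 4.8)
The plan is to rewrite $\beta(\widehat{\mathbf M},\mathbf M)$ as a supremum of Rayleigh quotients over polynomials and then trap it between an upper estimate coming from the radius of the support and a lower estimate coming from the diagonal of $\mathbf M$. First I would identify $\widehat{\mathbf M}$: for $p(z)=\sum_{k=0}^{n}v_kz^k$ with $v=(v_0,\dots,v_n,0,\dots)\in c_{00}$, expanding as in (\ref{identification}) gives $v\widehat{\mathbf M}v^{*}=\sum_{i,j}v_i\overline{v_j}\,c_{i+1,j+1}=\int|z|^2|p(z)|^2\,d\mu$, i.e.\ $\widehat{\mathbf M}$ is the moment matrix of the measure $|z|^2d\mu$, while $v\mathbf M v^{*}=\int|p(z)|^2d\mu$. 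Since $\mathbf M$ is positive definite and $\widehat{\mathbf M}$ is HSPD, the generalized Rayleigh criterion gives
$$\beta(\widehat{\mathbf M},\mathbf M)=\sup\Big\{\frac{\int|z|^2|p(z)|^2d\mu}{\int|p(z)|^2d\mu}:\ p\in\mathbb P[z]\setminus\{0\}\Big\}.$$

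For the upper bound, set $R=\sup\{|z|:z\in\sop(\mu)\}\in(0,+\infty]$. Because $|z|^2\le R^2$ $\mu$-almost everywhere, each Rayleigh quotient above is $\le R^2$, hence $\beta(\widehat{\mathbf M},\mathbf M)\le R^2$. By Proposition \ref{proposicion2}, $R=\lim_{n\to\infty}|c_{n,n}|^{1/(2n)}$ whenever the support is compact, and both quantities are $+\infty$ otherwise; in either case $R^2=\lim_{n\to\infty}|c_{n,n}|^{1/n}$, so $\beta(\widehat{\mathbf M},\mathbf M)\le\lim_{n\to\infty}|c_{n,n}|^{1/n}$.

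For the reverse inequality I would test the Rayleigh quotient with $v=e_n$, the $n$-th vector of the canonical basis of $c_{00}$, obtaining $\beta(\widehat{\mathbf M},\mathbf M)\ge c_{n+1,n+1}/c_{n,n}$ for every $n\ge0$. The Cauchy--Schwarz inequality (\ref{C-S}), $c_{n,n}^2\le c_{n-1,n-1}c_{n+1,n+1}$, says precisely that the sequence $\big(c_{n+1,n+1}/c_{n,n}\big)_{n\ge0}$ is non-decreasing, so its supremum equals its limit; and the telescoping identity $c_{n,n}/c_{0,0}=\prod_{k=0}^{n-1}(c_{k+1,k+1}/c_{k,k})$ together with the fact that geometric means of a convergent sequence converge to the same limit gives $\lim_n c_{n+1,n+1}/c_{n,n}=\lim_n|c_{n,n}|^{1/n}$. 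Hence $\beta(\widehat{\mathbf M},\mathbf M)\ge\lim_{n\to\infty}|c_{n,n}|^{1/n}$, which together with the upper bound finishes the proof.

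The whole argument is short once the identification of $v\widehat{\mathbf M}v^{*}$ is in place; the delicate points are keeping track of the non-compactly-supported case so that the stated equality is read in $[0,+\infty]$, and applying the monotonicity in (\ref{C-S}) in the correct direction so that $\sup_n c_{n+1,n+1}/c_{n,n}$ genuinely equals $\lim_n|c_{n,n}|^{1/n}$. As an alternative to the last paragraph, in the compactly supported case one can argue operator-theoretically: the supremum in the first display is $\|\mathcal D\|^2$ for the operator $\mathcal D$ of multiplication by $z$ on $P^2(\mu)$ used in the proof of Theorem 2, and $\|\mathcal D\|=R$ because $\mathcal D$ is a restriction of the normal operator $\Delta_z$ (so $\|\mathcal D\|\le\|\Delta_z\|=R$) while $\sop(\mu)=\sigma(\Delta_z)\subset\sigma(\mathcal D)$ forces the spectral radius of $\mathcal D$, hence $\|\mathcal D\|$, to be $\ge R$.
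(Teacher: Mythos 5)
Your argument is correct, and your primary route is genuinely different from the paper's. Both proofs begin the same way, by identifying $\widehat{\mathbf M}$ as the moment matrix of $|z|^2\,d\mu$ so that $\beta(\widehat{\mathbf M},\mathbf M)$ becomes the supremum of $\int|z|^2|p|^2d\mu/\int|p|^2d\mu$ over polynomials. From there the paper goes operator-theoretic: it reads this supremum as $\Vert\Delta_z\Vert^2$ for multiplication by $z$ on $P^{2}(\mu)$, asserts that this equals $\sup\{|z|^2:z\in\sop(\mu)\}$, and then invokes Proposition \ref{proposicion2}. You instead trap the supremum elementarily: the upper bound $|z|^2\le R^2$ $\mu$-a.e.\ gives $\beta(\widehat{\mathbf M},\mathbf M)\le R^2$, and testing with the basis vectors $e_n$ gives $\beta(\widehat{\mathbf M},\mathbf M)\ge c_{n+1,n+1}/c_{n,n}$, which by the log-convexity (\ref{C-S}) is non-decreasing with limit $\lim_n|c_{n,n}|^{1/n}$ via the standard ratio-to-root comparison. (Note the paper's own statement that $c_{n,n}/c_{n+1,n+1}$ is non-decreasing is a slip; your direction is the correct one.) Your version buys two things: it avoids the subnormality/spectral-radius input needed to justify $\Vert\mathcal D\Vert=R$ on $P^{2}(\mu)$ rather than on $L^{2}(\mu)$ --- a point the paper glosses over and which you in fact supply correctly in your ``alternative'' paragraph, which is essentially the paper's proof --- and it makes no boundedness assumption on the support, so the identity is established in $[0,+\infty]$ for any positive definite moment matrix, whereas the paper's operator argument implicitly requires $\Delta_z$ to be bounded, i.e.\ compact support. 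The only cost is the small extra lemma that $\lim_n a_{n+1}/a_n=\lim_n a_n^{1/n}$ when the ratio converges, which is standard.
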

\begin{proof} First of all we show that the matrix $\widehat{\mathbf{M}} $ is also a moment matrix. Indeed, if $\mu$ is a representing measure for $\mathbf{M}$ and we compute  the moments of the measure  $\vert z \vert^{2}\mu$  we have  

$$
c_{i,j}(\vert z \vert^{2}\mu)= \int \vert z \vert^2z^{i} \overline{z}^{j}d\mu=
\int z^{i+1}\overline{z}^{j+1}d\mu .
$$

\noindent This means that  the matrix obtained by removing the first row and column in  $\mathbf{M}$, that we denote
$\widehat{\mathbf{M}}$, is the moment matrix associated with the measure $\vert z \vert^{2}\mu$. Therefore, if $\Delta_{z}:P^{2}(\mu) \to P^{2}(\mu)$ is the multiplication by-$z$-operator, 
\begin{eqnarray*}
\Vert \Delta_{z} \Vert^{2} & = & \sup \{\int z p(z) \overline{zp(z)} d\mu  : \int  p(z) \overline{p(z)}  d\mu =1, p\in \mathbb{P}[z]\}=\\
\mbox{} & = & 
\sup \{ \int \vert p(z) \vert^{2} \vert z \vert^{2} d\mu : \int \vert p(z) \vert^{2}   d\mu =1, p\in \mathbb{P}[z]\}.
\end{eqnarray*}

\noindent By using  (\ref{identification}) we have
\begin{eqnarray*}
\Vert \Delta_{z} \Vert^{2}  & = & 
\sup \{ v\widehat{\mathbf{M}}v^{*}: v\in c_{00}, v\mathbf{M}v^{*}=1\} \\
\mbox{} &
= & 
\beta(\vert z \vert^{2} d\mu, d\mu)=\sup \{\vert z \vert^2: \;  z\in \sop(\mu)\} = \beta(\widehat{\mathbf{M}},\mathbf{M}).
\end{eqnarray*}

\noindent Now the result follows from Proposition $1$. 
\end{proof}

\begin{remark}
\noindent  Note that for an  HPD Toeplitz matrix $\mathbf{T}$ it is obvious that  $ \beta (\widehat {\mathbf{T}}, \mathbf{T}) = 1 $. Nevertheless, this identity  is not true in general for infinite HPD matrices; indeed, consider  the infinite diagonal matrix
$\mathbf{M}={\sl diag }(1,2,1,2^2,1,2^3,1,2^4, \ldots)$ which is an  HPD matrix such that   $\displaystyle{\lim_{n\to \infty} 
\vert c_{n,n} \vert^{1/n}}$ does not exist. Note that $\mathbf{M}$  is not a moment matrix since the elements of the diagonal  does not fullfile  Cauchy-Schwarz inequality  since $(2^n)^2 \not \leq 1 \cdot 1$.
\end{remark}

\noindent 
\section{A generalization of a sufficient condition for completeness of polynomials.}

\noindent In \cite{EGT1} a sufficient condition for completeness of polynomials is given in the following terms: if polynomials are dense in a certain measure space $L^{2}(\mu)$ for a compactly supported measure then the smallest eigenvalue of the truncated matrices of the moment matrices $\lambda_n\to 0$, as $n\to \infty$; using our notation $\lambda(\mu,{\bf m})=0$. There is proved that this is not  necessary condition for completeness of polynomials. Indeed, consider the measures ${\bf m}_{r}$ with $r<1$; by (\ref{ejemplo1})    $\lambda({\bf m}_{r},{\bf m})= 0$ and nevertheless polynomials are not dense in $L^{2}({\bf m}_{r})$. We here generalize this result comparing with the Lebesgue measures in circles with different center and radios. The proof is based in the following matrix version of the change of variable theorem:

\noindent Recall that if  $\varphi(z)=\alpha z + z_0$ is a similarity map  onto $\mathbb{C}$ with $\alpha,z_0\in \CC$  it is proved in \cite{EGT5} and  \cite{EST}  that  the moment matrix of the image measure $\mu \circ \varphi^{-1}$,  denoted by $\mathbf{M}^{\varphi}(\mu)$, has finite section  of size $(n+1)\times(n+1)$

\begin{equation} \label{semejanza}
 \mathbf{M}^{\varphi}_n(\mu) = {\bf A}_{n}(\alpha,\beta) {\bf M}_{n}(\mu){\bf A}_{n}^{*}
(\alpha,\beta )
\end{equation}
where ${\bf A}_{n}(\alpha, \beta)$ is defined
as in \cite{EST} ( with certain  modifications due to the different expression for the inner product)
\begin{equation}
{\bf A}_{n}(\alpha, \beta) =
\left (
\begin{array}{lllll}
{\binom 0 0} \alpha^{0} \beta^{0} & 0 & 0 & \ldots & 0 \\
{\binom 1 0} \alpha^{0}
\beta^{1} & {\binom 1 1} \alpha^{1} \beta^{0} & 0 & \ldots & 0 \\
{\binom 2 0} \alpha^{0} \beta^{2} &  {\binom 2 1} \alpha^{1}
\beta^{1}  & {\binom 2 2} \alpha^{2} \beta^{0} & \ldots & 0\\
\quad\vdots & \quad \vdots & \quad \vdots & \mbox{} & \quad \vdots \\
{\binom n 0}
\alpha^{0} \beta^{n} &
{\binom n 1} \alpha^{1} \beta^{n-1} & {\binom n 2}
\alpha^{2} \beta^{n-2} & \ldots & {\binom n n} \alpha^{n} \beta^{0}
\end{array}
\right ).
\end{equation}

\smallskip 

\noindent In the particular case of $\alpha =r\in \RR$ we obtain the folllowing result: 
\begin{lem}  \label{lema2}
Let $\varphi(z)=r z +z_0$ be a certain similarity map with $r>0$ and $\mu$ a measure. Consider the image measure $\mu\circ \varphi^{-1}$  obtained after similarity map. Then,  
$$
\lambda(\mu,{\bf m})=\lambda(  \mu\circ \varphi^{-1}  ,{\bf m}_{z_0; r}),\qquad \qquad \beta (\mu,{\bf m})=\beta ( \mu\circ \varphi^{-1}  ,{\bf m}_{z_0; r})
$$

\end{lem}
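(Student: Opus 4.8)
The plan is to reduce everything to a change-of-variable computation at the level of the truncated moment matrices, using formula (\ref{semejanza}) together with the generalized Rayleigh criterion. First I would observe that $\mathbf{m}_{z_0;r}$ is exactly the image measure $\mathbf{m}\circ\varphi^{-1}$ of the unit-circle Lebesgue measure $\mathbf{m}$ under $\varphi$, and that the moment matrix of $\mathbf{m}$ is the identity: $\mathbf{M}_n(\mathbf{m})=\mathbf{I}_n$ for every $n$. Writing $\mathbf{A}_n:=\mathbf{A}_n(r,z_0)$ for the lower triangular matrix in (\ref{semejanza}) (so that here $\alpha=r$ and $\beta=z_0$), formula (\ref{semejanza}) then yields, for every $n$,
\[ \mathbf{M}_n(\mu\circ\varphi^{-1})=\mathbf{A}_n\,\mathbf{M}_n(\mu)\,\mathbf{A}_n^{*}, \qquad \mathbf{M}_n(\mathbf{m}_{z_0;r})=\mathbf{M}_n(\mathbf{m}\circ\varphi^{-1})=\mathbf{A}_n\,\mathbf{A}_n^{*}. \]

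Next I would use that $\mathbf{A}_n$ is invertible: it is lower triangular and its diagonal entries are $\binom{k}{k}r^{k}z_0^{0}=r^{k}$, which are nonzero because $r>0$. Hence $v\mapsto w:=v\mathbf{A}_n$ is a bijection of $\CC^{n+1}\setminus\{0\}$ onto itself, and $\mathbf{M}_n(\mathbf{m}_{z_0;r})=\mathbf{A}_n\mathbf{A}_n^{*}$ is positive definite (being a truncation of an HPD matrix, since $\mathbb{S}_r(z_0)$ is infinite), so the generalized Rayleigh criterion applies. Performing the substitution,
\[ \lambda_n\bigl(\mathbf{M}_n(\mu\circ\varphi^{-1}),\mathbf{M}_n(\mathbf{m}_{z_0;r})\bigr)=\inf_{v\neq 0}\frac{v\mathbf{A}_n\mathbf{M}_n(\mu)\mathbf{A}_n^{*}v^{*}}{v\mathbf{A}_n\mathbf{A}_n^{*}v^{*}}=\inf_{w\neq 0}\frac{w\mathbf{M}_n(\mu)w^{*}}{ww^{*}}=\lambda_n(\mathbf{M}_n(\mu)), \]
and identically with $\sup$ in place of $\inf$, giving $\beta_n(\mathbf{M}_n(\mu\circ\varphi^{-1}),\mathbf{M}_n(\mathbf{m}_{z_0;r}))=\beta_n(\mathbf{M}_n(\mu))$. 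Since the sequences $\lambda_n$ are non-increasing and $\beta_n$ non-decreasing, letting $n\to\infty$ and invoking Definition \ref{definicion7} gives $\lambda(\mu\circ\varphi^{-1},\mathbf{m}_{z_0;r})=\lambda(\mu,\mathbf{m})$ and $\beta(\mu\circ\varphi^{-1},\mathbf{m}_{z_0;r})=\beta(\mu,\mathbf{m})$, as claimed.

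The computation is essentially routine; the only point I would be careful about is the compatibility of the truncation with the substitution $w=v\mathbf{A}_n$. This works precisely because $\mathbf{A}_n$ is triangular, so that the sections of $\mathbf{A}_n(r,z_0)$ nest consistently and (\ref{semejanza}) holds for each finite $n$ simultaneously, not merely in the limit; I would make this explicit rather than leave it implicit. I would also record in passing that $\mu\circ\varphi^{-1}$ has finite moments of all orders (because $\varphi$ is a polynomial and $\mu$ has finite moments), so that $\mathbf{M}(\mu\circ\varphi^{-1})$ and hence both indexes on the right-hand sides are well defined.
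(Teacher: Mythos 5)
Your proof is correct and follows essentially the same route as the paper: both apply the change-of-variable formula (\ref{semejanza}) to write $\mathbf{M}_n(\mu\circ\varphi^{-1})=\mathbf{A}_n\mathbf{M}_n(\mu)\mathbf{A}_n^{*}$ and $\mathbf{M}_n(\mathbf{m}_{z_0;r})=\mathbf{A}_n\mathbf{I}_n\mathbf{A}_n^{*}$, then perform the substitution $w=v\mathbf{A}_n$ in the generalized Rayleigh quotients and pass to the limit. Your version is in fact slightly more careful than the paper's, since you justify explicitly that $\mathbf{A}_n$ is invertible (diagonal entries $r^{k}\neq 0$) so that the substitution is a bijection of $\CC^{n+1}\setminus\{0\}$.
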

\begin{proof} 

We prove the result for the index $\lambda$ (for the other index the proof is analogous). Denote by $\mathbf{M}_{1}=\mathbf{M}(\mu)$ and $\mathbf{M}_{2}=\mathbf{M}^{\varphi}(\mu)$. We have seen in (\ref{semejanza}) that for all $n\in \NN_{0}$
$$
 \mathbf{M}^{\varphi}_n=\mathbf{A}_{n}(r;z_0) \mathbf{M}_{n} \mathbf{A}_{n}^{*}(r;z_0).
$$

\noindent Consequently for every $v\in \CC^{n+1}$ it follows

$$
v \mathbf{M}^{\varphi}_nv^{*}=v\mathbf{A}_{n}(r;z_0) \mathbf{M}_{n} \mathbf{A}_{n}^{*}(r;z_0)v^{*}
$$

\noindent and we have
\begin{eqnarray*}
\lambda_{n}(\mu\circ \varphi^{-1},{\bf m}_{z_0;r})&  = & \inf \{ v
\mathbf{M}^{\varphi}_{n} v^{*} : v \mathbf{M}_n({\bf m}_{z_0;r})
v^{*}=1\}\\
\mbox{} &  = & 
\inf \{ v\mathbf{A}_{n}(r;z_0) \mathbf{M}_{1,n} \mathbf{A}_{n}^{*}(r;z_0)
 v^{*} : v \mathbf{A}_{n}(r;z_{0}) \mathbf{I}_{n}
\mathbf{A}_{n}^{*}(r;z_{0})
v^{*}=1\}.
\end{eqnarray*}

\noindent  By making the change $w=v\mathbf{A}_{n}(r;z_0)$ in the last equality,
it follows
\[ \lambda_{n}(\mu\circ \varphi^{-1},{\bf m}_{z_0;r})= \inf (w \mathbf{M}_{n} w^{*}:
w \mathbf{I}_{n} w^{*}=1\},
\]
and by taking limits we obtain 
$$
\lambda(\mu\circ \varphi^{-1},{\bf m}_{z_0;r})= \inf \{ w \mathbf{M}(\mu) w^{*} :ww^{*}=1\} =\lambda(\mu,{\bf m})
$$

\noindent as we required.

\end{proof}

\begin{remark} The above lemma can be obviously extended  for any 
infinitely supported 
measures in the complex plane  $\mu_1,\mu_2$  in the following way: let   $\varphi$ be  a similarity map then 
$$
\lambda(\mu_1,\mu_2)=\lambda(\mu_1\circ \varphi^{-1}
,\mu_2\circ \varphi^{-1})\;, \qquad  \beta(\mu_1,\mu_2)=\beta(\mu_1\circ \varphi^{-1}
,\mu_2\circ \varphi^{-1}).
$$
\end{remark}

\bigskip

\begin{theorem} \label{teorema4}
Let $\mu$  be a  compactly supported measure.   If  $P^{2}(\mu)=L^{2}(\mu)$, then for every  $z_0\in \CC$ and $r>0$, it holds
$$
\lambda \left(\mu,{\bf m}_{z_0;r} \right )=0.
$$
\end{theorem}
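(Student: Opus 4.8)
\noindent The plan is to transport the statement, by an affine change of variables, to the case of the reference measure $\mathbf{m}$ on the unit circle, where it is precisely the sufficient condition for completeness of polynomials from \cite{EGT1} recalled at the beginning of this section. First I would dispose of the degenerate case: if $\mu$ has finite support, then the truncated matrices $\mathbf{M}_n(\mu)$ are singular for all large $n$, so $\lambda_n(\mathbf{M}_n(\mu),\mathbf{M}_n(\mathbf{m}_{z_0;r}))=0$ eventually and hence $\lambda(\mu,\mathbf{m}_{z_0;r})=0$ trivially; thus we may assume $\mu$ is infinitely supported.

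\noindent Next, fixing $z_0\in\CC$ and $r>0$, I would introduce the similarity map $\varphi(w)=\frac{1}{r}(w-z_0)$. It carries the sphere $\mathbb{S}_r(z_0)$ onto $\mathbb{T}$ and sends $\mathbf{m}_{z_0;r}$ to $\mathbf{m}$, that is $\mathbf{m}_{z_0;r}\circ\varphi^{-1}=\mathbf{m}$. Setting $\widetilde{\mu}=\mu\circ\varphi^{-1}$, which is again compactly supported with $\sop(\widetilde{\mu})=\varphi(\sop(\mu))$, the similarity invariance of the index $\lambda$ (Lemma \ref{lema2} together with the remark following it) would yield
$$
\lambda(\mu,\mathbf{m}_{z_0;r})=\lambda(\widetilde{\mu},\mathbf{m}).
$$
So it suffices to prove that $\lambda(\widetilde{\mu},\mathbf{m})=0$, and for this, by the result of \cite{EGT1}, it is enough to check that $P^{2}(\widetilde{\mu})=L^{2}(\widetilde{\mu})$.

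\noindent To obtain this last equality I would use the change-of-variables identity $\int|g|^{2}\,d\widetilde{\mu}=\int|g\circ\varphi|^{2}\,d\mu$, valid for all $g$ since $\widetilde{\mu}$ is the image of $\mu$ under $\varphi$. It shows that $g\mapsto g\circ\varphi$ is an isometric isomorphism of $L^{2}(\widetilde{\mu})$ onto $L^{2}(\mu)$, and because $\varphi$ is affine this map takes $\mathbb{P}[z]$ bijectively onto $\mathbb{P}[z]$; consequently it carries the closure $P^{2}(\widetilde{\mu})$ onto the closure $P^{2}(\mu)$. Since $P^{2}(\mu)=L^{2}(\mu)$ by hypothesis and the isomorphism is surjective onto $L^{2}(\mu)$, we get $P^{2}(\widetilde{\mu})=L^{2}(\widetilde{\mu})$, which finishes the argument.

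\noindent The argument is short once the similarity invariance of the indexes is available, and the main (and only mildly delicate) point is the transfer of polynomial completeness from $\mu$ to its affine image $\widetilde{\mu}$, together with the observation that compact support is preserved, so that the known $\mathbf{m}$-version of the theorem from \cite{EGT1} applies directly.
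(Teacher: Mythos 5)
Your proof is correct and takes essentially the same route as the paper's: both reduce to the unit-circle case via the similarity invariance of $\lambda$ (Lemma \ref{lema2} and the remark after it) and then invoke the result of \cite{EGT1}, the only differences being that the paper argues by contraposition while you argue directly, and that you spell out the transfer of polynomial completeness under the affine map and the finitely-supported case, which the paper leaves implicit.
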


\begin{proof} Assume the contrary. Then, there exists $z_0\in \CC$ and $r>0$ such that
$$
\lambda(\mu,{\bf m}_{z_0;r})>0.
$$
\noindent Then, consider the similarity $\varphi(z)=\frac{1}{r}z-z_0$ and the image measure $\mu \circ \varphi^{-1}$ obtained by applying the similarity $\varphi$ to $\mu$. Then, by lemma (\ref{lema2}) it follows that
$$
\lambda(\mu \circ \varphi^{-1},{\bf m})=\lambda(\mu,{\bf 
m}_{z_0;r})>0
$$

\noindent Then by the results in  \cite{EGT1} it follows that $P^{2}(\mu \circ \varphi^{-1}) \neq L^{2}(\mu \circ \varphi^{-1})$ and consequently
$L^{2}(\mu)\neq P^{2}(\mu)$.

\end{proof}

\noindent Above theorem can be rewritten as follows: if a   measure $\mu$ verifies that 
 $\lambda(\mu,{\bf m}_{z_0;r})>0 $ for a certain Lebesgue measure ${\bf m}_{z_0;r}$, obviously non  complete (i.e. polynomaials are not dense)   then also $\mu$ is not complete. One can wonder if we may replace measures ${\bf m}_{z_0;r}$
 by any non complete measure and this motivates the following problem: 
 
\smallskip

\noindent {\bf Problem:} Assume $\mu_1,\mu_2$ two compactly supported measures verifying: 
\begin{enumerate}
\item $\mu_2$ is not complete, i.e., $P^{2}(\mu_2)\neq L^{2}(\mu_2)$.
\item $\lambda(\mu_1,\mu_2)>0$.
\end{enumerate}
\noindent Is it true that $L^{2}(\mu_1)\neq P^{2}(\mu_1)$?

\bigskip

\noindent We  obtain a partial answer  when the first measure is supported in a Jordan curve:

\begin{prop} Let $\mu_1,\mu_2$ be infinitely supported measures in the complex plane such that  $P^{2}(\mu_2)\neq L^{2}(\mu_2)$ and  $\lambda(\mu_1,\mu_2)>0$. Then, the following holds: 
\smallskip 

\begin{center}
    If  $\mu_1$ is supported on a Jordan curve $\Gamma$, then  $P^{2}(\mu_1) \neq L^{2}(\mu_1)$.
\end{center}

\end{prop}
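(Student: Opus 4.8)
The plan is to use the fact that $\mu_1$ is supported on a Jordan curve $\Gamma$ together with the hypothesis $\lambda(\mu_1,\mu_2)>0$ to transfer the failure of polynomial density from $\mu_2$ to $\mu_1$. The starting observation is that by Proposition~\ref{proposicion1}(ii), the condition $\lambda(\mu_1,\mu_2)>0$ is equivalent to the existence of a constant $c>0$ with $\int |p(z)|^2\,d\mu_1 \geq c\int |p(z)|^2\,d\mu_2$ for every polynomial $p(z)$. Moreover, since $\mu_2$ is not complete, the set $\Pc(\sop(\mu_2))$ must strictly contain $\sop(\mu_2)$; more precisely, by Theorem~\ref{teorema1} (applied with $\beta(\mu_2,\mu_1)=1/\lambda(\mu_1,\mu_2)<\infty$, using Remark~2) we get $\sop(\mu_2)\subset \Pc(\sop(\mu_1))$. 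Since $\mu_1$ is supported on the Jordan curve $\Gamma$, we have $\Pc(\sop(\mu_1))=\Pc(\Gamma)=\mathring{\Gamma}\cup\Gamma$, the closed region bounded by $\Gamma$. Hence $\sop(\mu_2)\subset \overline{\mathring{\Gamma}}$.

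Next I would bring in the known theory of polynomial approximation on Jordan curves, which is exactly the setting of \cite{EGT5}: for a measure $\mu_1$ supported on a Jordan curve $\Gamma$, the space $P^2(\mu_1)$ can be identified with a space of functions on $\Gamma$, and the failure $P^2(\mu_1)\neq L^2(\mu_1)$ is detected by the presence of a nonzero function in $L^2(\mu_1)$ orthogonal to all polynomials. The key step is to produce such a function. The idea is: because $\mu_2$ is not complete, there is a nonzero $g\in L^2(\mu_2)$ with $\int \overline{g(z)}\,z^k\,d\mu_2=0$ for all $k\geq 0$. One wants to manufacture from $g$ (and from the comparison inequality) a corresponding obstruction for $\mu_1$. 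The cleanest route is to argue at the level of the Hilbert spaces $P^2_{\mathbf{M}(\mu_i)}$: the inequality $\|p\|_{\mu_2}^2 \leq \frac{1}{c}\|p\|_{\mu_1}^2$ together with the trivial bound coming from $\beta$ finiteness (via $\beta(\mu_2,\mu_1)<\infty$, again Proposition~\ref{proposicion1}(i)) shows that the identity on $\mathbb{P}[z]$ extends to a bounded invertible map between the abstract completions $P^2_{\mathbf{M}(\mu_1)}$ and $P^2_{\mathbf{M}(\mu_2)}$, i.e., the two norms are equivalent on polynomials. Consequently $P^2_{\mathbf{M}(\mu_1)}\cong P^2_{\mathbf{M}(\mu_2)}$ as Hilbert spaces.

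The final step identifies these abstract completions with concrete function spaces. Since $\mu_1$ lives on a Jordan curve, by the results of \cite{EGT5} the space $P^2_{\mathbf{M}(\mu_1)}$ is realized as $P^2(\mu_1)\subset L^2(\mu_1)$, and $P^2(\mu_1)=L^2(\mu_1)$ would force a certain analytic/geometric condition on $\Gamma$ and $\mu_1$; conversely a strict inclusion $P^2(\mu_2)\subsetneq L^2(\mu_2)$ means the abstract space $P^2_{\mathbf{M}(\mu_2)}$ is ``small'' in a quantifiable way (it has infinite codimension, or it fails to separate points of an open set). Transporting this through the Hilbert space isomorphism shows $P^2_{\mathbf{M}(\mu_1)}=P^2(\mu_1)$ cannot be all of $L^2(\mu_1)$, because otherwise $P^2_{\mathbf{M}(\mu_2)}$ would be all of $L^2(\mu_2)$, contradicting (1). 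I expect the main obstacle to be this last identification: the abstract completion $P^2_{\mathbf{M}(\mu)}$ need not embed injectively into $L^2(\mu)$ in general, and one must invoke the special structure of measures on Jordan curves from \cite{EGT5} to guarantee that for $\mu_1$ the completion is genuinely a subspace of $L^2(\mu_1)$ and that density there is governed precisely by the index $\lambda(\mu_1,\cdot)$ against reference measures — reconciling the two-measure comparison with the single-measure completeness criterion is the delicate point.
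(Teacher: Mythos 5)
There is a genuine gap, and it occurs exactly at the step you flag as delicate. First, a structural error: you invoke ``$\lambda(\mu_1,\mu_2)>0$'' and ``$\beta(\mu_2,\mu_1)<\infty$'' as if they were two independent pieces of information giving the two opposite inequalities needed for norm equivalence. By Remark 2 they are the \emph{same} condition, and both yield only the one-sided bound $\int|p|^2\,d\mu_2\leq C\int|p|^2\,d\mu_1$. The reverse bound $\int|p|^2\,d\mu_1\leq C'\int|p|^2\,d\mu_2$ would require $\beta(\mu_1,\mu_2)<\infty$, which is not a hypothesis. So the identity on $\mathbb{P}[z]$ extends only to a bounded (not necessarily injective, certainly not invertible) map $P^2_{\mathbf{M}(\mu_1)}\to P^2_{\mathbf{M}(\mu_2)}$, and the claimed Hilbert space isomorphism collapses. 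Second, even granting some map between the completions, ``transporting smallness'' from $P^2(\mu_2)$ to $P^2(\mu_1)$ is not an argument: non-density is not a codimension statement that passes through a bounded map, and you yourself concede you do not know how to make the identification. (A side remark: your claim that non-completeness of $\mu_2$ forces $\Pc(\sop(\mu_2))\supsetneq\sop(\mu_2)$ is false --- area measure on the closed unit disk has polynomially convex support and is not complete --- though you do not actually use it.)

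The missing idea is to work with \emph{bounded point evaluations}, which is precisely the invariant that transfers in the direction your one-sided inequality allows. The paper's proof runs: by Thomson's theorem, $P^2(\mu_2)\neq L^2(\mu_2)$ produces a point $\xi$ and a constant $c>0$ with $|p(\xi)|^2\leq c\int|p|^2\,d\mu_2$ for all polynomials $p$; the inequality $\int|p|^2\,d\mu_2\leq \frac{1}{C}\int|p|^2\,d\mu_1$ coming from $\lambda(\mu_1,\mu_2)>0$ then makes $\xi$ a bounded point evaluation for $\mu_1$ as well; and the result of \cite{EGT5} that a measure supported on a Jordan curve admitting a bounded point evaluation cannot be complete finishes the proof. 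Note that every inequality in this chain points the same way, which is why only the single hypothesis $\lambda(\mu_1,\mu_2)>0$ is needed; your attempted route needs a two-sided comparison that the hypotheses do not provide.
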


\begin{proof} By Thomson's  theorem \cite{Thomson} since  $L^{2}(\mu_2) \neq P^{2}(\mu_2)$ it follows the existence of a bounded point evaluation for $\mu_2$, say $\xi$. Recall that   $\xi\in \CC$ is a bounded point evaluation for the measure  $\mu_2$ if there exists a constant $c>0$ such that
 for every $p(z)\in \mathbb{P}[z]$,
$$
\vert p(\xi) \vert^2 \leq c \int \vert p(z)\vert^2 d\mu_2.
$$

\noindent Since $\lambda(\mu_1,\mu_2)>0$ it  easily follows that $\xi$ is a bounded point evaluation of $\mu_1$. Indeed, by Proposition \ref{proposicion1} there exists $C>0$ such that for every polynomial 
$p(z)$ 
$$
\int \vert p(z) \vert^2 d \mu_1 \geq C \int \vert p(z) \vert^2 d \mu_2.
$$

\noindent Consequently, combining both inequalities we have that for every polynomial $p(z)$ 
 $$
\vert p(\xi)\vert^2 \leq C\int \vert p(z) \vert^2 d\mu_2 \leq
\dfrac{c}{C} \int \vert p(z) \vert^2 d\mu_1
$$

\noindent and  $\xi$ is a bounded point evaluation for $\mu_1$. The conclusion now follows by  \cite{EGT5} since for supported in Jordan curves measures  the existence of a bounded point evaluation implies that the measure is not complete.

\end{proof}

\bigskip

\today

\end{document}